\numberwithin{equation}{section}
\def\R{\mathbb{R}}
\def\Z{\mathbb{Z}}
\def\E{\mathscr{E}}
\def\F{\mathscr{F}}
\def\lam{\lambda}
\def\Lam{\Lambda}
\def\1{\mathds{1}}
\def\eps{\varepsilon}
\renewcommand\ge{\geqslant}
\renewcommand\leq{\leqslant}
\renewcommand\geq{\geqslant}
\renewcommand\hat{\widehat}
\newcommand{\ft}[1]{\widehat #1}
\newcommand{\dotprod}[2]{\langle #1 , #2 \rangle}
\newcommand{\dist}{\operatorname{dist}}
\newcommand{\Vol}{\operatorname{Vol}}
\theoremstyle{plain}
\newtheorem{thm}{Theorem}[section]
\newtheorem{lem}[thm]{Lemma}
\newtheorem*{claim*}{Claim}
\newcommand{\thmref}[1]{Theorem~\ref{#1}}
\newcommand{\lemref}[1]{Lemma~\ref{#1}}
\theoremstyle{definition}
\newtheorem*{definition*}{Definition}
\newtheorem*{remarks*}{Remarks}
\newtheorem*{remark*}{Remark}
\newtheorem{remark}[thm]{Remark}
\newenvironment{enumerate-math}
{\begin{enumerate}
\addtolength{\itemsep}{5pt}
}
{\end{enumerate}}
\newenvironment{enumerate-text}
{\begin{enumerate}
\addtolength{\itemsep}{5pt}
}
{\end{enumerate}}
\begin{document}

 \title[Spectrality of polytopes and equidecomposability]
{Spectrality of polytopes and equidecomposability by translations}

\author{Nir Lev}
\address{Department of Mathematics, Bar-Ilan University, Ramat-Gan 5290002, Israel}
\email{levnir@math.biu.ac.il}

\author{Bochen Liu}
\address{Department of Mathematics, The Chinese University of Hong Kong, Shatin, N.T., Hong Kong}
\email{Bochen.Liu1989@gmail.com}

\thanks{N.L.\ is supported by ISF grant No.\ 227/17 and ERC Starting Grant No.\ 713927.}
\thanks{B.L.\ is partially supported by the grant CUHK24300915 from the Hong Kong Research Grant Council.}

\subjclass[2010]{42B10, 52B11, 52B45}	
\date{October 31, 2019}

\keywords{Fuglede's conjecture, spectral set, polytope, equidecomposability}

\begin{abstract}
Let $A$ be a polytope in $\R^d$  (not necessarily convex or connected).
We say that $A$ is spectral if the space $L^2(A)$ has an orthogonal 
basis consisting of exponential functions. A result due to
Kolountzakis and Papadimitrakis (2002) asserts that if $A$ is a spectral 
polytope, then  the total area of the $(d-1)$-dimensional faces of
$A$ on which the outward normal is pointing at a given direction, 
must coincide with the total area of those $(d-1)$-dimensional faces 
on which the outward normal is pointing at the opposite direction.
In this paper, we prove an extension of this result to faces of
all dimensions between $1$ and $d-1$. As a consequence 
we obtain that any spectral polytope $A$ can be dissected into a finite 
number of smaller polytopes, which 
can be rearranged using translations to form a cube. 
\end{abstract}

\maketitle

% =========================================================

\section{Introduction} \label{secI1}

\subsection{}
Let $A \subset \R^d$ be a bounded, measurable set of positive Lebesgue 
measure. It is said to be  \emph{spectral} if there exists a countable set 
$\Lambda\subset \R^d$  such that the system of exponential functions
\begin{equation}
	\label{eqI1.1}
	E(\Lambda)=\{e_\lambda\}_{\lambda\in \Lambda}, \quad e_\lambda(x)=e^{2\pi
	i\dotprod{\lambda}{x}},
\end{equation}
is orthogonal and complete in  $L^2(A)$,
  that is, the system is an orthogonal basis for the space.
Such a set  $\Lambda$ is called a \emph{spectrum} for $A$.
The classical example of a spectral set is the unit cube $A = \left[-\frac1{2}, \frac1{2}\right]^d$, for which  the set $\Lam = \Z^d$ serves as a spectrum.

Interest in spectral sets has been inspired for many years by an observation due to Fuglede  \cite{Fug74}, that the notion of spectrality is closely related to another, geometrical  notion -- the tiling by translations. We say that $A$ \emph{tiles the space by translations} 
if there exists a countable set $\Lambda\subset \R^d$ such that the collection 
of sets $\{A + \lam\}$, $\lam \in \Lam$, consisting of translated copies of $A$,
constitutes a partition of $\R^d$ up to measure zero.

Fuglede originally conjectured  that a set $A \subset \R^d$ is spectral 
if and only if it can tile the space by translations. While it is still an open problem whether this
conjecture holds e.g.\ for convex domains\footnote{Note added in proof: After the first version of
this paper was submitted, the Fuglede conjecture for convex domains was settled
in the affirmative, see \cite{LM19}.}  (see \cite{Kol00, IKT01, IKT03, GL17, GL18}),
nowadays we know that the conjecture is not true in general, even if $A$ is 
assumed to be a finite union of cubes  \cite{Tao04}.
Nevertheless, with time it became apparent 
that spectral sets behave in many ways like sets which can tile by translations.
In particular, many results about spectral sets have 
analogous results for sets which can tile, and vice versa.
For example, Fuglede proved in \cite{Fug74} that a set
$A$ tiles the space  with respect to a \emph{lattice} translation set $\Lambda$
if and only if the dual lattice $\Lambda^*$ is a spectrum for $A$.

\subsection{}
In this paper we establish a connection between spectrality, and a geometrical notion which is closely related to tiling -- the \emph{equidecomposability by translations}. In this context, we will assume the set $A$ to be a polytope, although not necessarily a convex or a connected one.

Recall that  a \emph{polytope} in $\R^d$ is a set which can be represented as the 
union of a finite number of simplices with disjoint interiors, where a \emph{simplex}
  is the convex hull of $d + 1$ points in $\R^d$  which do not all lie in some hyperplane. 

If $A$ and $B$ are two polytopes in $\R^d$, then they are said to be
 \emph{equidecomposable} (or \emph{dissection equivalent}, or 
\emph{scissors congruent}) if the polytope $A$  can be partitioned,
 up to measure zero, into a finite number of smaller polytopes which can be 
rearranged using rigid motions to form, again up to measure zero, a 
partition of the polytope $B$. If the pieces of the partition can be
rearranged  using translations only, then we say that
 $A$ and $B$ are  \emph{equidecomposable by translations}.

It has long been known that if a polytope  $A \subset \R^d$ can tile the 
space by translations, then  $A$ must be equidecomposable by translations to a cube
of the same volume. This result was first proved
by {M\"urner} in \cite{Mur75}, and
was later rediscovered in \cite{LM95a}. In this paper, we establish
that the analogous result for spectral sets is true:

\begin{thm}
\label{thmA3.2}
Let $A$ be a polytope in $\R^d$  (not necessarily convex or connected).
If $A$ is spectral, then $A$ is equidecomposable by translations
to a cube of the same volume.
\end{thm}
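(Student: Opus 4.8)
The plan is to deduce \thmref{thmA3.2} from two ingredients. The first is the classical Hadwiger--type theory of translational scissors congruence, which provides a complete set of invariants: for polytopes of equal volume, equidecomposability by translations is governed by the vanishing of the \emph{Hadwiger functionals}, one for each dimension $k$ with $1 \le k \le d-1$. Roughly, the $k$-dimensional Hadwiger invariant of a polytope $A$ records, for each $k$-dimensional linear subspace $V$ and each choice of complementary data, the signed total $k$-volume of the $k$-faces of $A$ parallel to $V$, weighted by a function on the orthogonal complement; the precise statement (due to Hadwiger, and in the form convenient here to Jessen--Thorup and Sah) says that $A$ is equidecomposable by translations to a cube of volume $\vol(A)$ if and only if all these functionals vanish on $A$. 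So the theorem reduces to showing that \emph{every} spectral polytope has vanishing Hadwiger invariants in all dimensions $1,\dots,d-1$.

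The second, and main, ingredient is the extension of the Kolountzakis--Papadimitrakis theorem announced in the abstract: if $A$ is a spectral polytope, then for each $k \in \{1,\dots,d-1\}$ and each $k$-dimensional direction, the $k$-faces of $A$ pointing "one way" balance those pointing "the opposite way," in the appropriate weighted sense. I would prove this the way the $(d-1)$-dimensional case is proved: encode the face structure of $A$ analytically through the asymptotic behavior of the Fourier transform $\ft{\1_A}(\xi)$ as $|\xi| \to \infty$ along suitable directions — the top-order term sees the $(d-1)$-faces, and by restricting $\xi$ to lower-dimensional subspaces (or extracting lower-order terms) one sees the $k$-faces. The orthogonality of the exponential system $E(\Lambda)$ forces $\ft{\1_A}$ to vanish on the difference set $\Lambda - \Lambda$, which is a "large" set (its counting function has full density, since $E(\Lambda)$ is also complete, by the standard Landau-type density argument). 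Feeding this density information into the asymptotic expansion of $\ft{\1_A}$ kills the face-balancing "defect" in each dimension, exactly as in \cite{KolPap02}. The technical heart is setting up the right asymptotic expansion of $\ft{\1_A}$ adapted to $k$-faces and making the density argument go through uniformly — this is where I expect the real work to lie.

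Granting the face-balancing statement, the last step is to check that it is equivalent to the vanishing of the Hadwiger functionals. This is essentially a matter of unwinding definitions: the Hadwiger invariant in dimension $k$ is built precisely from signed sums of $k$-volumes of $k$-faces grouped by their affine hull's direction, tensored against data on the complementary directions, and the balancing condition is exactly the assertion that each such signed sum vanishes. One should be a little careful that the weight functions appearing in the Hadwiger formalism are handled correctly — but morally, "opposite faces have equal area, in every dimension" is the same statement as "all translational Dehn--Hadwiger invariants vanish." Combining this with the Hadwiger completeness theorem and the equality of volumes (which holds since $A$ is spectral of positive measure, and a cube of volume $\vol(A)$ is the target) yields that $A$ is equidecomposable by translations to that cube, which is the claim.

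\textbf{Main obstacle.}
The genuinely hard part is the extension of Kolountzakis--Papadimitrakis to faces of all intermediate dimensions: one must produce an asymptotic expansion of $\ft{\1_A}$ that isolates the contribution of the $k$-dimensional faces, and then combine it with the density of the zero set $\Lambda-\Lambda$ of $\ft{\1_A}$ to force the corresponding face-defect to be zero. Controlling error terms in this expansion uniformly over the relevant family of directions, and ruling out cancellations between faces of different dimensions, is the crux; everything else is either classical (Hadwiger's theorem) or bookkeeping (translating "balanced faces" into "vanishing invariants").
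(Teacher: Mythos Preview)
Your proposal is correct and follows essentially the same route as the paper: reduce \thmref{thmA3.2} to the vanishing of all Hadwiger functionals (this is exactly \thmref{thmA3.1}), prove that via a Fourier-asymptotic extension of Kolountzakis--Papadimitrakis to $k$-faces for all $1\le k\le d-1$, and then invoke the Jessen--Thorup/Sah completeness of the Hadwiger invariants. Two small remarks: the ``face-balancing equals vanishing Hadwiger invariants'' step is definitional in the paper's setup (the Hadwiger functional $H_\Phi$ \emph{is} the signed $r$-volume sum over $r$-sequences parallel to $\Phi$), and the density input actually used is only that a spectrum $\Lambda$ is relatively dense, not the full Landau density.
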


This result can be understood informally as saying that a 
spectral polytope $A \subset \R^d$ can ``nearly''
tile the space by translations. This conclusion is best possible
in a sense, since there are examples of spectral polytopes
which cannot tile (as shown in \cite{Tao04}).

One can easily verify that equidecomposability by translations
constitutes an equivalence relation on the set of all polytopes in $\R^d$.
\thmref{thmA3.2} yields the conclusion that all the spectral polytopes
of a given volume  lie in the same equivalence class.

We will obtain \thmref{thmA3.2} as a consequence of another result,
 which will  also be proved in this paper, and which will be described next.

\subsection{}
In \cite{KP02}, Kolountzakis and Papadimitrakis proved  the
following result: Let $A$ be a polytope in $\R^d$ (again,
$A$ may be non-convex or even disconnected). If $A$ 
 is  spectral,  then  the total area of the $(d-1)$-dimensional faces of
$A$ on which the outward normal is pointing at a given direction, 
must coincide with the total area of those $(d-1)$-dimensional faces 
on which the outward normal is pointing at the opposite direction.

In this paper, we will prove an extension of this result to faces of
all dimensions between $1$ and $d-1$. The statement of our result 
involves certain functions which are called the \emph{Hadwiger functionals}, 
and whose definition will now be given.
For more details we refer the reader to
\cite[Sections 2.10, 3.19]{Bol78} where a friendly introduction to
Hadwiger functionals in dimensions two and three can be found.

Let $r$ be an integer, $1 \leq r \leq d-1$, and suppose that
\begin{equation}
\label{eq:subsps}
V_r \subset V_{r+1} \subset \cdots \subset V_{d-1} \subset V_d = \R^d 
\end{equation}
is a sequence of linear subspaces such that $V_j$ has dimension $j$.
 Each subspace $V_j$ ($r \leq j \leq d-1$)
 in the sequence divides the next one $V_{j+1}$ into two half-spaces; let us call one of them
 the positive half-space, and the other one the negative half-space. Such a sequence of
 nested linear subspaces, endowed with a choice of positive and negative half-spaces, 
will be called an \emph{$r$-flag}, and will be denoted by $\Phi$.

Now let $A$ be a polytope in $\R^d$, and suppose that $A$ has a sequence of faces 
\begin{equation}
\label{eq:faces}
F_r \subset F_{r+1} \subset \cdots \subset F_{d-1} \subset F_d = A,
\end{equation}
where $F_j$ is a $j$-dimensional face of $A$ which is parallel 
to $V_j$ ($r \leq j \leq d-1$).
To each  face $F_j$  we associate a coefficient $\varepsilon_j$, defined in
the following way:
 $\varepsilon_j =+1$  if the face  $F_{j+1}$ adjoins its subface $F_j$ from the
same side where the positive half-space of $V_{j+1}$ adjoins $V_j$; while
$\varepsilon_j = -1$ if $F_{j+1}$ adjoins $F_j$ from the opposite
side. We then define 
\begin{equation}
\label{eq:weight}
H_{\Phi}(A) = \sum \varepsilon_{r} \varepsilon_{r+1} \cdots \varepsilon_{d-1} \Vol_r (F_r) ,
\end{equation}
where the sum goes through all sequences of faces of $A$ as above, and 
where $\Vol_r(F_r)$ denotes the $r$-dimensional volume of $F_r$. 
If no sequence of faces of $A$ as above exists, then we define  the value of $H_{\Phi}(A)$ 
to be zero. We call $H_{\Phi}$ the \emph{Hadwiger functional} associated to the $r$-flag $\Phi$.

For example, if $\Phi$ is a $(d-1)$-flag, then the value of
$H_{\Phi}(A)$ is equal to the difference between 
the total area of the $(d-1)$-dimensional faces of
$A$ on which the outward normal is perpendicular to the hyperplane 
$V_{d-1}$ and is pointing at the direction of the negative half-space 
determined  by $V_{d-1}$, and the total 
area of those $(d-1)$-dimensional faces 
on which the outward normal is pointing at the opposite direction.
Hence the result from \cite{KP02} can be equivalently stated by saying
that if $A$ is spectral, then we must have $H_{\Phi}(A) = 0$
for every $(d-1)$-flag $\Phi$.

We will prove that much more is actually true. Our main result
is the following:

\begin{thm}
\label{thmA3.1}
Let $A$ be a polytope in $\R^d$  (not necessarily convex or connected).
If $A$ is spectral, then $H_{\Phi}(A) = 0$
for every $r$-flag $\Phi$ $(1 \leq r \leq d-1)$.
\end{thm}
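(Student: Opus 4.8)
The plan is to analyze the Fourier transform $\ft{\1}_A$ near infinity along the direction determined by the flag $\Phi$, in the spirit of \cite{KP02}. Recall that spectrality of $A$ with spectrum $\Lambda$ gives the quadrature-type identity $\sum_{\lambda \in \Lambda} |\ft{\1}_A(\xi + \lambda)|^2 = (\vol A)^2$ for a.e.\ $\xi$, together with orthogonality $\ft{\1}_A(\lambda - \lambda') = 0$ for distinct $\lambda, \lambda' \in \Lambda$. The starting point is the classical formula expressing $\ft{\1}_A$ for a polytope $A$ as an alternating sum of contributions of the faces: by iterated integration by parts (the divergence theorem applied repeatedly), for $\xi$ in a suitable cone one obtains an asymptotic expansion
\begin{equation}
\label{eq:pp-expansion}
\ft{\1}_A(t\xi) = \sum_{j} \frac{c_j(\xi)}{t^{\,d-j}} + o(t^{-(d-r)})
\end{equation}
as $t \to +\infty$, where the coefficient of the slowest-decaying genuine term is built precisely from the $r$-dimensional faces of $A$, weighted by products of signs $\varepsilon_r \varepsilon_{r+1} \cdots \varepsilon_{d-1}$ recording on which side each face adjoins the next. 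The key algebraic observation is that when $\xi$ is chosen to lie in $V_r^{\perp}$ and to be a generic limit of directions approaching the flag $\Phi$ (so that $V_{d-1}, \ldots, V_r$ are seen, successively, as the hyperplanes orthogonal to the decaying directions), this coefficient is, up to a nonzero normalizing constant, exactly $H_{\Phi}(A)$. This is the same mechanism by which, for $r = d-1$, the coefficient of $t^{-1}$ in $\ft{\1}_A(t\xi)$ is $(2\pi i t)^{-1}$ times the signed sum of $(d-1)$-face areas.

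Given this, the argument mirrors the $(d-1)$-dimensional case. Suppose toward a contradiction that $H_{\Phi}(A) \neq 0$ for some $r$-flag $\Phi$. Then along the chosen direction $\xi$ we have $|\ft{\1}_A(t\xi)| \sim c\, t^{-(d-r)}$ with $c > 0$; in particular $\ft{\1}_A$ does not decay faster than $t^{-(d-r)}$ along this ray, and more importantly this slow decay is \emph{stable} under the perturbations of $\xi$ that realize the flag, so it holds on a set of directions of positive measure near $\xi$, uniformly in $t$. On the other hand, the quadrature identity forces, for a.e.\ $\xi_0$, that $|\ft{\1}_A(\xi_0 + \lambda)|^2$ summed over $\lambda \in \Lambda$ converges, so $\ft{\1}_A(\lambda) \to 0$ as $|\lambda| \to \infty$ along the spectrum; combined with orthogonality (the difference set $\Lambda - \Lambda$ lies in the zero set of $\ft{\1}_A$ away from the origin) and a counting/density argument, one shows that $\ft{\1}_A$ cannot maintain order-$t^{-(d-r)}$ size on a positive-measure cone of directions out to infinity — the spectrum is forced to be too dense in that cone, violating the separation coming from orthogonality together with the fact that $\Lambda$ has density $\vol A$. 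This yields the contradiction.

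I expect the main obstacle to be the bookkeeping in \eqref{eq:pp-expansion}: namely, isolating from the full face expansion of $\ft{\1}_A$ the precise term whose coefficient is $H_{\Phi}(A)$, and verifying that it is genuinely the dominant one along the right direction — one must choose the perturbation of $\xi$ so that the faces parallel to $V_{d-1}, \ldots, V_{r+1}$ contribute, in turn, to higher-order terms while the $r$-faces parallel to $V_r$ survive at order $t^{-(d-r)}$, and one must check that no lower-dimensional faces or cancellations interfere. A secondary difficulty is that $A$ need not be convex or connected, so ``faces'' and the adjacency signs $\varepsilon_j$ must be handled combinatorially via the local structure of $\1_A$ near each face rather than via convex geometry; here the additivity of $\ft{\1}_A$ over a simplicial decomposition of $A$, and the corresponding additivity of $H_{\Phi}$, will be used to reduce many of the local computations to the case of a single simplex. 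Once the asymptotic identification is in place, the passage from ``slow decay on a positive-measure cone'' to the contradiction with spectrality is essentially the argument of \cite{KP02}, adapted to decay rate $t^{-(d-r)}$ in place of $t^{-1}$.
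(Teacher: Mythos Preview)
There is a genuine gap in your identification of the leading term. You assert that along a suitable ray one has $|\ft{\1}_A(t\xi)|\sim c\,t^{-(d-r)}$ with $c>0$ proportional to $|H_\Phi(A)|$. This is not correct: after $d-r$ applications of the divergence theorem, the ``coefficient'' of $t^{-(d-r)}$ is not a constant but an \emph{oscillating} function of $t$. Concretely, if $\Phi_r$ is put in standard position and $v$ is an appropriate vector in $V_r^{\perp}$, what one actually obtains (this is the paper's \thmref{thmC3.5}) is
\[
\ft{\1}_A(tv)\,\prod_{j=r+1}^{d}(-2\pi i\,(tv)_j)\;\approx\;\ft{\mu}_{A,\Phi_r}(tv)\;=:\;p(t),
\]
and by \lemref{lemC4.1} the function $p$ is a trigonometric polynomial in $t$, a finite sum $\sum_k a_k\,e^{-2\pi i\tau_k t}$ whose frequencies $\tau_k$ record the positions of the $r$-faces parallel to $V_r$. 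One has $p(0)=H_\Phi(A)$, but $p(t)$ has no limit as $t\to\infty$; in general $t^{\,d-r}\,\ft{\1}_A(tv)$ vanishes for arbitrarily large $t$, so no asymptotic of the form $c\,t^{-(d-r)}$ holds. Your expansion \eqref{eq:pp-expansion} with $t$-independent coefficients $c_j(\xi)$ therefore cannot be valid for the relevant $\xi$.

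The missing idea is \emph{almost periodicity}. Since $p$ is a trigonometric polynomial with $p(0)=H_\Phi(A)\neq 0$, one finds (\lemref{lemC4.7}) a relatively dense set $T\subset\R$ such that $|p(t'-t)-p(0)|$ is small for all $t,t'\in T$; this is exactly what is needed because orthogonality of the spectrum concerns \emph{differences} $\lambda'-\lambda$. The paper thickens $\{tv:t\in T\}$ to a tube $E$, shows $|\ft{\mu}_{A,\Phi_r}|$ is bounded below on $E-E$, and combines this with the approximation of $\ft{\1}_A$ by $\ft{\mu}_{A,\Phi_r}$ to force any two spectrum points in a translate of $E$ to lie within bounded distance of each other, contradicting relative density of $\Lambda$. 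Note that this is already how \cite{KP02} proceeds for $r=d-1$: that argument is also an almost-periodicity argument, not a pointwise slow-decay argument, so your ``essentially the argument of \cite{KP02}, adapted to decay rate $t^{-(d-r)}$'' inherits the same gap even in the base case. Finally, the Parseval identity $\sum_{\lambda}|\ft{\1}_A(\xi+\lambda)|^2=(\vol A)^2$ does not rescue the plan: it controls $\ft{\1}_A$ on translates of $\Lambda$, whereas what is needed is a lower bound on $\ft{\1}_A$ (equivalently on $\ft{\mu}_{A,\Phi_r}$) along the specific line $\R v$, and that lower bound can only hold on a relatively dense subset, not uniformly.
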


This theorem thus extends the result in \cite{KP02} to
$r$-dimensional faces of $A$, for every $r$ between $1$ and $d-1$.

\subsection{}
In the special case when the polytope $A$ is convex, the result in
\cite{KP02} says that if $A$ is spectral, then
each one of the $(d-1)$-dimensional faces of $A$ has a parallel 
face of the same area. By a classical theorem of Minkowski, this 
condition is equivalent to $A$ being centrally symmetric. Hence
any spectral convex polytope must be centrally symmetric. This
result was obtained for the first time in \cite{Kol00}, using a different method.

Moreover, in \cite[Section 4]{GL17} it was proved 
that if a convex, centrally symmetric polytope $A$ is  spectral,
then all the $(d-1)$-dimensional faces of $A$ 
must also be centrally symmetric. This
conclusion can also be stated in terms of the Hadwiger functionals;
indeed, it is equivalent to the statement 
that $H_{\Phi}(A) = 0$ for every $(d-2)$-flag $\Phi$.

In fact, in  \cite[Section 3.3]{Mur77} it is
shown that for a convex polytope $A \subset \R^d$, 
the condition that $H_{\Phi}(A) = 0$ 
for every $r$-flag $\Phi$ $(1 \leq r \leq d-1)$,
is equivalent to $A$ being centrally symmetric and
having centrally symmetric $(d-1)$-dimensional faces.
Thus one can view  \thmref{thmA3.1} as an extension
to non-convex polytopes of the result which states that
if a convex polytope $A$ is  spectral, then $A$ must
be centrally symmetric and have 
centrally symmetric $(d-1)$-dimensional faces.

Our proof of \thmref{thmA3.1} is inspired by both 
\cite{KP02} and \cite[Section 4]{GL17}.
The proof involves an application of a Stokes-type theorem,
which provides an expansion of the Fourier transform
$\ft{\1}_A$ of the indicator function $\1_A$ of a polytope $A \subset \R^d$
 in terms of the Fourier transforms of $r$-dimensional
volume measures on $r$-dimensional faces of $A$.
By identifying the main terms versus error terms in this expansion, we obtain
an approximate expression for the function $\ft{\1}_A$ which is valid
in certain directions. The analysis gets more involved for smaller values of
the face dimension $r$, since then there exist more different types of errors terms, and for
each type a different estimate is required in order to show that the
 term is small.

\subsection{}
We will now clarify the relationship between our two results 
stated above,  namely,  Theorems \ref{thmA3.2}
and \ref{thmA3.1}. In fact, we will see that the first result
 is a consequence of the second one.

We start by recalling that the
 theory of equidecomposability of polytopes originated from
\emph{Hilbert's third problem} -- one of the famous 23 problems posed by
Hilbert at the International Congress of Mathematicians in 1900.
It is obvious that if two polytopes $A$ and $B$ are equidecomposable, then they must
have the same volume. Hilbert's third problem was concerned with the converse assertion:
if $A$ and $B$ are two  polytopes of the same volume, are they necessarily
equidecomposable by rigid motions? It has been  known earlier that in  two dimensions, 
any two polygons of equal area are equidecomposable. However, in the same year 1900
it was shown by Dehn that in three dimensions, such a result is no longer true
 (a comprehensive exposition can be found in \cite{Bol78}).

Dehn's solution to Hilbert's third problem involved an important notion in
the theory of equidecomposability  -- the notion of \emph{additive invariants}. 
Let $G$ be a group of rigid motions of $\R^d$. A function $\varphi$, defined on the set of all polytopes in 
$\mathbb R^d$, is said to be an \emph{additive $G$-invariant} if (i) it is additive, namely, if $A$ and $B$
 are two polytopes with disjoint interiors then $\varphi(A \cup B) = \varphi(A) + \varphi(B)$; 
and (ii) it is invariant under motions from the
group $G$, that is, $\varphi(A) = \varphi(g(A))$ whenever $A$ is a polytope and $g \in G$.

It is obvious that for two polytopes $A$ and $B$ to be equidecomposable using motions
from  $G$, it is  necessary  
that $\varphi(A) = \varphi(B)$ for any additive $G$-invariant $\varphi$. A general problem 
is to construct a ``complete system'' of additive $G$-invariants, that is, invariants which together provide a 
condition which is both necessary and sufficient for two polytopes of the same volume to be 
equidecomposable using motions from the group $G$.

In his solution to Hilbert's third problem, Dehn constructed an additive invariant with respect to the group of all 
rigid motions of $\R^3$, which allowed him to show that a regular tetrahedron and a cube of the
same volume are not equidecomposable \cite{Deh01}. Dehn invariants for polytopes in $\R^d$ have also been studied
\cite{Had54}, and shown to form a complete system in dimensions $d=3,4$ \cite{Syd65, Jes72}. 
It remains an open problem as to whether these invariants are complete also in dimensions $d \geq 5$.

Equidecomposability with respect to the \emph{group of translations}  was
first studied by Hadwiger. He introduced the Hadwiger functionals
$H_\Phi$ defined above, and proved that they form a system of additive invariants 
with respect to translations \cite{Had52, Had57}.
 Moreover, it was shown that the Hadwiger invariants form a 
complete system, so that together they provide a necessary and sufficient condition 
for two polytopes of the same volume to be equidecomposable by translations.
 This was proved by Hadwiger and Glur in dimension two \cite{HG51}, 
by Hadwiger in dimension three \cite{Had68}, and by Jessen and Thorup \cite{JT78}, and independently 
Sah \cite{Sah79}, in every dimension.

This clarifies why \thmref{thmA3.2}  is a consequence of  \thmref{thmA3.1}.
Indeed,   \thmref{thmA3.1} asserts that if a polytope $A \subset \R^d$
is spectral, then we must have $H_{\Phi}(A) = 0$ for every $r$-flag $\Phi$ $(1 \leq r \leq d-1)$. Let $B$ be a cube of the same volume as $A$, then it 
 is easy to check that also $H_{\Phi}(B) = 0$ for every flag $\Phi$.
We thus obtain that $H_{\Phi}(A) = H_{\Phi}(B)$ for all flags $\Phi$.
By the completeness of the Hadwiger invariants we can therefore conclude that
$A$ and $B$ must be equidecomposable by translations, and so
\thmref{thmA3.2} follows.

We remark that the proof given in \cite{Mur75} (or in \cite{LM95a})
 of the fact  that a polytope  $A \subset \R^d$ which can tile by translations 
 must be equidecomposable by translations to a cube, relies on the same consideration.
First it is proved that the tiling assumption implies that
$H_{\Phi}(A) = 0$ for all flags $\Phi$, and then
the completeness of the Hadwiger invariants is used to
conclude that $A$ is  equidecomposable by translations to a cube.

The rest of the paper is devoted to the proof of  \thmref{thmA3.1}.

% =========================================================

\section{Preliminaries}
\label{sec:prelim}

\subsection{Notation}
We will use $\dotprod{\cdot}{\cdot}$ and $| \cdot |$ 
to denote respectively the standard scalar product and norm in $\R^d$.
We denote by $\vec e_1, \vec e_2, \dots, \vec e_d$ the standard basis vectors in $\R^d$,
and by $x_1, x_2, \dots, x_d$ the coordinates 
of a vector $x \in \R^d$.

If $A \subset \R^d$ and $\tau$ is a vector in $\R^d$,
then we let $A + \tau = \{a+ \tau: a \in A\}$ denote
 the translate of $A$ by the vector
$\tau$. If $A, B$ are two subsets of $\R^d$, then
$A+B$ and $A-B$ denote respectively their set of sums and set of 
differences.

For each $\xi \in \R^d$ we denote by $e_\xi$ the exponential function
$e_\xi (x) := e^{2\pi i\langle \xi,x\rangle}$,  $x \in \R^d$.

By the Fourier transform of a function $f \in L^1(\R^d)$ we mean the function
\[
\ft f (\xi)=\int_{\R^d} f (x) \, \overline{e_\xi (x)} \, dx,
\]
and similarly, the Fourier transform of a finite, complex measure
 $\mu$ on $\R^d$ is  the function
\[
\ft{\mu} (\xi)=\int_{\R^d} \overline{e_\xi (x)}  \,  d\mu(x).
\]

\subsection{Spectra}
If $A$ is a bounded, measurable set in $\R^d$ of positive measure, then
by a \emph{spectrum} for $A$ we mean a countable set $\Lambda\subset\R^d$
such that the system of exponential functions $E(\Lam)$ defined by \eqref{eqI1.1} is 
orthogonal and complete in the space $L^2(A)$.

For any two points $\lam,\lam'$ in $\R^d$ we have
$\dotprod{e_\lambda}{e_{\lambda'}}_{L^2(A)} = \hat{\1}_A(\lambda'-\lambda)$, where
$\ft{\1}_A$ is the Fourier transform of the indicator function $\1_A$ of the set $A$.
The orthogonality of the system $E(\Lambda)$ in $L^2(A)$ is therefore equivalent to the condition
\begin{equation}
	\label{eqP1.2}
	(\Lambda-\Lambda) \setminus \{0\} \subset \{ \xi \in \R^d : \ft{\1}_A(\xi) = 0\}.
\end{equation}

A set $\Lambda\subset \R^d$ is said to be \emph{uniformly discrete} if there is
$\delta>0$ such that $|\lambda'-\lambda|\ge \delta$ for any two distinct points
$\lambda,\lambda'$ in $\Lambda$. 
The condition \eqref{eqP1.2} implies that every spectrum $\Lambda$ of $A$
is a uniformly discrete set.

The set $\Lambda$ is said to be \emph{relatively dense} if
there is $R >0$ such that every ball of radius $R$ contains at least one point from $\Lam$. It is well-known that if $\Lam$ is a spectrum for $A$,
then $\Lam$ must also be a relatively dense 
set (see e.g.\ \cite[Section 2C]{GL17}).

The property of $\Lambda$ being a spectrum for $A$ is
invariant under translations of both $A$ and $\Lambda$.
If $M$ is a $d \times d$  invertible matrix, then
 $\Lambda$ is a spectrum for $A$ if and only if
the set $(M^{-1})^\top (\Lambda)$ is a spectrum for $M(A)$.

\subsection{Polytopes and equidecomposability}
A \emph{simplex} in $\R^d$ is the convex hull of $d + 1$ points
 which do not all lie in some hyperplane. A \emph{polytope} in $\R^d$
is a set which
can be represented as the union of a finite number of simplices
with disjoint interiors. Remark that a polytope is not necessarily
a convex, nor even a connected, set.

Let $A$ and $B$ be two polytopes in $\R^d$. We say that
$A$ and $B$ are \emph{equidecomposable}
if there exist finite decompositions of $A$ and $B$ of the form
\[
A = \bigcup_{j=1}^{N} A_j, \quad  B = \bigcup_{j=1}^{N} B_j
\]
where $A_1, \dots, A_N$ are polytopes with pairwise disjoint interiors,
$B_1, \dots, B_N$ are also polytopes with pairwise disjoint interiors,
and for each $j$ the polytope $B_j$ is the image of $A_j$ under some
 rigid motion.
If for each $j$ there is a vector $\tau_j \in \R^d$
such that $B_j = A_j + \tau_j$ (that is, 
$B_j$ is the image of $A_j$ under  translation), then we say that the polytopes
$A$ and $B$ are \emph{equidecomposable by translations}.

\subsection{Flags}
If $r$ is an integer, $0 \leq r \leq d-1$, then an \emph{$r$-flag} $\Phi$ in $\R^d$ 
is defined to be a sequence of linear subspaces 
\begin{equation}
\label{eq:C3.1.1}
V_r \subset V_{r+1} \subset \cdots  \subset V_{d-1} \subset V_d = \R^d
\end{equation}
such that $V_j$ has dimension $j$. Each subspace $V_j$ ($r \leq j \leq d-1$)
 in the sequence divides the next one $V_{j+1}$ into two half-spaces;
we assume that $\Phi$ is endowed with a choice of one of these
half-spaces being called positive, and the other being called negative.

It will be convenient to define also a \emph{$d$-flag} in $\R^d$ to be the sequence 
which consists of just one subspace $V_d = \R^d$.

Let $A$ be a polytope in $\R^d$, and suppose that we have a sequence
\[
F_r \subset F_{r+1} \subset \cdots \subset F_{d-1} \subset F_d = A,
\]
where $F_j$ is a $j$-dimensional face of $A$ $(r \leq j \leq d-1)$.
 Such a sequence will be called an \emph{$r$-sequence} of faces of 
the polytope $A$, and will be denoted by $\F_r$.

Let $\Phi$ be an $r$-flag  determined by
a sequence of linear subspaces 
$V_r \subset V_{r+1} \subset \cdots  \subset  V_d$,
and let $\F_r$ be an $r$-sequence of faces 
$F_r \subset F_{r+1} \subset \cdots \subset  F_d$
of $A$. We say that the face $F_j$ is parallel to the
subspace $V_j$ if the affine hull of $F_j$ is a translate of $V_j$.
We say that the $r$-sequence $\F_r$ is parallel to the
 $r$-flag $\Phi$ if $F_j$ is parallel to $V_j$ for each 
$r \leq j \leq d-1$.

Each $r$-flag $\Phi$ $(1 \leq r \leq d-1)$ determines a function 
$H_{\Phi}$ defined on the set of all polytopes in $\R^d$, which is given 
by \eqref{eq:weight}. The function $H_\Phi$ is  additive,
and it is invariant with respect to translations. It will be called the
\emph{Hadwiger functional} associated to the $r$-flag $\Phi$.

Notice that if two $r$-flags $\Phi$ and $\Psi$ correspond to the same sequence of linear subspaces
 $V_r \subset V_{r+1} \subset \cdots \subset V_d$, then 
either $H_{\Phi} = H_{\Psi}$  or $H_{\Phi} = - H_{\Psi}$
(depending on the choice of positive and negative half-spaces).
Hence each sequence of linear subspaces essentially corresponds to one Hadwiger functional.

If $\Phi$ is a $d$-flag, then its associated Hadwiger functional $H_\Phi$
is defined by  $H_\Phi(A)=\Vol_d(A)$ for any polytope $A \subset \R^d$.

(We do not consider Hadwiger functionals associated to $0$-flags, 
as these functionals vanish identically and thus they do not provide any information.)

\subsection{Flag measures}
Let $\Phi$ be an $r$-flag   in $\R^d$ $(0 \leq r \leq d)$,
determined by a sequence of linear subspaces  \eqref{eq:C3.1.1}.
To each polytope $A\subset \R^d$ we associate a signed measure 
$\mu_{A,\Phi}$ on $\R^d$ given by
\begin{equation}
\label{eq:C3.1.2}
\mu_{A,\Phi} = \sum_{\F_r} \varepsilon_{r} \varepsilon_{r+1} \cdots \varepsilon_{d-1} \, {\Vol_r}|_{F_r} ,
\end{equation}
where $\F_r$  goes through all $r$-sequences of faces
$F_r \subset F_{r+1} \subset \cdots \subset F_d$
of the polytope $A$ that are parallel to $\Phi$,
 the $\varepsilon_j$ are the $\pm 1$ coefficients 
associated to the $r$-sequence $\F_r$ with
respect to $\Phi$ in the same way as in \eqref{eq:weight},
 and  ${\Vol_r}|_{F_r}$ denotes the $r$-dimensional volume measure
restricted to the face $F_r$.

If $r=0$, then by an $r$-dimensional face of $A$ we mean a 
vertex of $A$, and by the measure ${\Vol_r}|_{F_r}$ we mean
the Dirac measure at the vertex $F_r$.
Hence the flag measure $\mu_{A,\Phi}$ associated to a $0$-flag $\Phi$ is a
discrete measure supported on vertices of $A$.

If $\Phi$ is a $d$-flag, then $\mu_{A,\Phi} = {\Vol_d}|_A$
(the Lebesgue measure restricted to $A$).

It follows from \eqref{eq:weight} and \eqref{eq:C3.1.2} that the 
measure $\mu_{A,\Phi}$ satisfies
\begin{equation}
\label{eq:C3.1.5}
\int d \mu_{A,\Phi}  = H_{\Phi}(A)
\end{equation}
for any $r$-flag $\Phi$   $(1 \leq r \leq d)$.

(If $\mu_{A,\Phi}$ is the flag measure associated to a $0$-flag $\Phi$,
then $\int d \mu_{A,\Phi}  = 0$.)

% =========================================================

\section{Stokes-type theorem for Fourier transforms of flag measures}

The main result obtained in this section (\thmref{thmC7.1})
provides an expansion of the Fourier transform of a $k$-dimensional
flag measure, in terms of  Fourier transforms of $(k-1)$-dimensional
flag measures. It is basically an application of Stokes theorem,
which allows us to replace integration over $k$-dimensional
faces of a polytope, by integration over the relative
boundaries of these faces (see also \cite[p.\ 341]{Bar02}, for instance).

In \cite[Section 4]{LL18} we proved a similar result but in a
more refined context, where the equidecomposability of polytopes
was studied with respect to a proper subgroup of all the translations.
For the completeness of our exposition,
we reproduce here the arguments in a self-contained 
version that is suitable for our present context.

\subsection{}
Let $A$ be a polytope in $\R^d$, and let $\Phi_k$ be a $k$-flag $(1 \leq k \leq d)$ 
determined by a sequence of linear subspaces 
$V_k \subset V_{k+1} \subset \dots \subset V_d$. 
The Fourier transform of the measure $\mu_{A,\Phi_k}$ is given by
\begin{equation}
\label{eq:C7-ft-muj}
\ft{\mu}_{A,\Phi_k}(\xi) = \int \overline{e_\xi} \, d\mu_{A,\Phi_k} =
\sum_{\F_k}  \eps_k \eps_{k+1} \cdots\eps_{d-1}\int_{F_k} \overline{e_\xi},
\end{equation}
where  $\F_k$ goes through all $k$-sequences of faces
$F_k \subset F_{k+1} \subset \cdots \subset F_d$
of the polytope $A$ that are parallel to $\Phi_k$, 
the $\varepsilon_j$'s are the $\pm 1$ coefficients associated to the $k$-sequence
$\F_k$ with respect to $\Phi_k$, and the integral on the right hand side
is taken with respect  to the $k$-dimensional volume measure on the face $F_k$.

Let $\partial F_k$ denote the relative boundary of the face $F_k$, and for each
$x \in \partial F_k$ let $n(x)$ be a vector in the linear subspace 
$V_k$ which is outward unit normal  to $F_k$ at the point $x$.
Then for every $v \in V_k$ we have
\begin{equation}
\label{eq:C7-div-thm}
-2\pi i \dotprod{\xi}{v} 
\int_{F_k} \overline{e_\xi} = \int_{\partial F_k} \dotprod{n}{v} \, \overline{e_\xi},
\end{equation}
which follows by applying the divergence theorem to the 
function $f(x) =  \overline{e_\xi (x)} \, v$ over the face $F_k$.
The relative boundary $\partial F_k$ consists of a finite number
of $(k-1)$-dimensional faces $F_{k-1}$ of $F_k$.
Hence, using \eqref{eq:C7-ft-muj} and \eqref{eq:C7-div-thm}, we get
\begin{align}
\label{eq:C7-sum-in-r-1-flags-a}
&-2\pi i \dotprod{\xi}{v} \, \ft{\mu}_{A,\Phi_k}(\xi) 
= \sum_{\F_k}  \eps_k \eps_{k+1}  \cdots\eps_{d-1} \int_{\partial F_k}
 \dotprod{n}{v} \, \overline{e_\xi} \\
\label{eq:C7-sum-in-r-1-flags-b}
&\qquad =  \sum_{\F_k} \eps_k \eps_{k+1}  \cdots\eps_{d-1}
 \sum_{F_{k-1}} \dotprod{n}{v} \int_{ F_{k-1}} \, \overline{e_\xi},
\end{align}
where $F_{k-1}$ goes through the $(k-1)$-dimensional subfaces of 
the $k$-dimensional face $F_k$ from  the sequence $\F_k$,
 and $n$ is the outward unit normal to $F_k$ on $F_{k-1}$.

Let $\E$ be the collection of all the $(k-1)$-sequences  of faces 
$F_{k-1} \subset F_k \subset \cdots \subset F_d$ of
 the polytopes $A$,  such that $F_j$ is parallel to $V_j$ 
$(k \leq j \leq d-1)$. 
We define an equivalence relation on $\E$ by saying that two elements 
$\F_{k-1}$ and $\F'_{k-1}$ from $\E$ are equivalent
if the $(k-1)$-dimensional face $F_{k-1}$ from the sequence $\F_{k-1}$ 
is parallel to the $(k-1)$-dimensional face 
$F'_{k-1}$ from $\F'_{k-1}$. Then $\E$ can be partitioned  into a finite number of
equivalence classes $\E^1, \E^2, \dots, \E^N$ induced
by this  equivalence relation.

To each equivalence class $\E^l$ $(1 \leq l \leq N)$ we associate a 
$(k-1)$-flag $\Phi_{k-1}^l$, 
defined in the following way. The flag $\Phi_{k-1}^l$  is 
determined by a sequence of linear subspaces  
\[
V_{k-1}^l \subset V_k \subset V_{k+1} \subset \cdots  \subset V_d = \R^d,
\]
where $V_k, V_{k+1}, \dots, V_d$ are the  linear subspaces  that 
determine the $k$-flag $\Phi_k$, while $V^l_{k-1}$ is a new linear subspace of
dimension $k-1$. The subspace $V^l_{k-1}$  is chosen such that it is
parallel to all the $(k-1)$-dimensional faces $F_{k-1}$ belonging to
sequences  $\F_{k-1}$ from the equivalence class $\E^l$.
It is obvious from the definition of the equivalence relation on $\E$
that the subspace $V^l_{k-1}$ exists and that it is unique.
 We endow the $(k-1)$-flag $\Phi_{k-1}^l$ with a choice of positive 
and negative half-spaces, 
by saying that the positive and negative half-spaces of $V_{j+1}$ determined
by the subspace $V_j$ coincide with those from the $k$-flag $\Phi_k$
for all $k \leq j \leq d-1$; while the positive and negative half-spaces 
of $V_k$ that are determined by the new subspace $V^l_{k-1}$
are selected in an arbitrary way.

For each $1 \leq l \leq N$, let $\sigma^l$ denote the (unique) unit vector in 
the linear subspace $V_k$ which is normal to $V^l_{k-1}$ and is pointing
 towards the negative half-space of $V_k$ determined  by $V^l_{k-1}$. 
We then observe that if $\F_{k-1}$ is a sequence of faces
$F_{k-1} \subset F_k \subset \cdots \subset F_d$
belonging to the equivalence class $\E^l$, and if
$n$ is the outward unit normal to $F_k$ on $F_{k-1}$,
then we have $n = \eps_{k-1} \sigma^l$, where 
$\eps_{k-1} = +1$  if $F_k$ adjoins $F_{k-1}$ from the positive side
of $V_k$ which is determined by $V^l_{k-1}$,  and
$\eps_{k-1} = -1$  if $F_k$ adjoins $F_{k-1}$ from the negative side.
It follows that the sum in
\eqref{eq:C7-sum-in-r-1-flags-b} is equal to
\begin{equation}
\label{eq:C7-sum-in-r-1-flags-c}
\sum_{l=1}^{N} \dotprod{\sigma^l}{v} \sum_{\F_{k-1}} 
 \eps_{k-1} \eps_k \eps_{k+1}  \cdots\eps_{d-1}\int_{ F_{k-1}} \, \overline{e_\xi},
\end{equation}
where  $\F_{k-1}$ goes through all  $(k-1)$-sequences of faces
$F_{k-1} \subset F_k \subset \cdots \subset F_d$
of the polytope $A$ that are parallel to $\Phi_{k-1}^l$, and
the $\varepsilon_j$'s are the $\pm 1$ coefficients associated to the $(k-1)$-sequence
$\F_{k-1}$ with respect to $\Phi_{k-1}^l$. 
But now the inner sum in \eqref{eq:C7-sum-in-r-1-flags-c} is just
the integral of the function $\overline{e_\xi}$ with respect to the measure
$\mu_{A,\Phi_{k-1}^l}$. Hence combining
\eqref{eq:C7-sum-in-r-1-flags-a}, \eqref{eq:C7-sum-in-r-1-flags-b},
\eqref{eq:C7-sum-in-r-1-flags-c} we finally arrive at the following result:

\begin{thm}
\label{thmC7.1}
Let $A$ be a polytope in $\R^d$, and let
$\Phi_k$ be a $k$-flag $(1 \leq k \leq d)$
determined by a sequence of linear subspaces 
$V_k \subset V_{k+1} \subset \dots \subset V_d$. Then 
for every $\xi \in \R^d$ and every
$v\in V_k$ we have
\begin{equation}
\label{eq:C7.1.1}
-2 \pi i \dotprod{\xi}{v} \, \ft{\mu}_{A,\Phi_k} (\xi) = 
\sum_{l=1}^{N} \dotprod{\sigma^l}{v} \,
\ft{\mu}_{A,\Phi_{k-1}^l} (\xi),
\end{equation}
where the flags $\Phi_{k-1}^l$ 
and vectors $\sigma^l$   are as above.
\end{thm}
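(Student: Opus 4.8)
The plan is to prove \thmref{thmC7.1} by a direct computation: starting from the Fourier transform of the $k$-dimensional flag measure $\mu_{A,\Phi_k}$, apply the divergence theorem on each $k$-dimensional face, then reorganize the resulting boundary integrals into the Fourier transforms of $(k-1)$-dimensional flag measures. First I would write out $\ft{\mu}_{A,\Phi_k}(\xi)$ as the signed sum over $k$-sequences $\F_k$ parallel to $\Phi_k$ of the integrals $\int_{F_k}\overline{e_\xi}$, as in \eqref{eq:C7-ft-muj}. The core analytic input is the identity \eqref{eq:C7-div-thm}: for a fixed vector $v\in V_k$, applying the divergence theorem to the vector field $f(x)=\overline{e_\xi(x)}\,v$ on the $k$-dimensional face $F_k$ (which lies in a translate of $V_k$), and using that $\div f = -2\pi i\dotprod{\xi}{v}\,\overline{e_\xi}$, converts the integral over $F_k$ into $\dotprod{n}{v}$-weighted boundary integrals. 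Multiplying through by $-2\pi i\dotprod{\xi}{v}$ and summing over $\F_k$ gives \eqref{eq:C7-sum-in-r-1-flags-a}, and expanding $\partial F_k$ as the union of its $(k-1)$-dimensional subfaces gives \eqref{eq:C7-sum-in-r-1-flags-b}.

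The second half of the proof is the bookkeeping: identifying the right-hand side of \eqref{eq:C7-sum-in-r-1-flags-b} with a linear combination of Fourier transforms of $(k-1)$-flag measures. Here I would introduce the collection $\E$ of all $(k-1)$-sequences $F_{k-1}\subset F_k\subset\cdots\subset F_d$ with $F_j$ parallel to $V_j$ for $k\le j\le d-1$, and partition $\E$ into equivalence classes $\E^1,\dots,\E^N$ according to the direction of the $(k-1)$-dimensional face $F_{k-1}$. To each class $\E^l$ one attaches a $(k-1)$-flag $\Phi_{k-1}^l$ by prepending to $V_k\subset\cdots\subset V_d$ the unique $(k-1)$-dimensional subspace $V_{k-1}^l$ parallel to all the $F_{k-1}$ in that class, with an arbitrary choice of positive/negative half-spaces for $V_k$ relative to $V_{k-1}^l$. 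The key geometric observation is that for a sequence in $\E^l$, the outward unit normal $n$ to $F_k$ along $F_{k-1}$ is $\pm\sigma^l$, where $\sigma^l$ is the unit vector in $V_k$ normal to $V_{k-1}^l$ pointing into the negative half-space; the sign is exactly the coefficient $\eps_{k-1}$ that enters the definition of $\mu_{A,\Phi_{k-1}^l}$. Substituting $n=\eps_{k-1}\sigma^l$ and factoring out $\dotprod{\sigma^l}{v}$ from each class turns the double sum into \eqref{eq:C7-sum-in-r-1-flags-c}, and the inner sum there is precisely $\int\overline{e_\xi}\,d\mu_{A,\Phi_{k-1}^l} = \ft{\mu}_{A,\Phi_{k-1}^l}(\xi)$ by definition \eqref{eq:C3.1.2}. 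Chaining \eqref{eq:C7-sum-in-r-1-flags-a}, \eqref{eq:C7-sum-in-r-1-flags-b}, \eqref{eq:C7-sum-in-r-1-flags-c} yields \eqref{eq:C7.1.1}.

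The step I expect to be the main obstacle is the combinatorial reindexing between \eqref{eq:C7-sum-in-r-1-flags-b} and \eqref{eq:C7-sum-in-r-1-flags-c}, namely verifying that each $(k-1)$-sequence of faces parallel to $\Phi_{k-1}^l$ is counted exactly once with the correct sign. One has to check carefully that summing over pairs $(\F_k, F_{k-1})$ — a $k$-sequence together with a $(k-1)$-subface of its bottom face $F_k$ — is the same as summing over $(k-1)$-sequences $\F_{k-1}$, and that the product $\eps_k\cdots\eps_{d-1}$ from $\F_k$ together with the newly introduced $\eps_{k-1}$ reproduces exactly the coefficient product appearing in the definition of $\mu_{A,\Phi_{k-1}^l}$. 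This hinges on two facts: that $F_j$ being parallel to $V_j$ for $k\le j\le d-1$ is the same condition whether one thinks of $F_j$ as a face in $\F_k$ or in $\F_{k-1}$, so the "upper" part of the flag data is unchanged; and that the sign rule defining $\eps_{k-1}$ (whether $F_k$ adjoins $F_{k-1}$ from the positive or negative side of $V_k$ relative to $V_{k-1}^l$) is consistent across all sequences in the same class $\E^l$, which is what allows $\dotprod{\sigma^l}{v}$ to be pulled out of the inner sum. Everything else — the divergence theorem application, the decomposition of $\partial F_k$ into subfaces, and recognizing the final inner sum as a flag-measure integral — is routine once this indexing is set up correctly.
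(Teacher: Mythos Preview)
Your proposal is correct and follows essentially the same route as the paper: the argument preceding the statement of \thmref{thmC7.1} is precisely the computation you describe, using the divergence identity \eqref{eq:C7-div-thm} on each $F_k$, decomposing $\partial F_k$ into subfaces, grouping the resulting $(k-1)$-sequences into the classes $\E^l$, and recognizing $n=\eps_{k-1}\sigma^l$ to reassemble the sums as $\ft{\mu}_{A,\Phi_{k-1}^l}$. Your explicit discussion of why the reindexing from \eqref{eq:C7-sum-in-r-1-flags-b} to \eqref{eq:C7-sum-in-r-1-flags-c} is a bijection with the correct signs is exactly the bookkeeping the paper leaves implicit.
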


\begin{remark}
It may happen that the polytope $A$ does not have any $k$-sequences
  of faces $\F_k$ that are parallel to the $k$-flag $\Phi_k$. In this case, 
$\mu_{A,\Phi_{k}}$ is the zero measure, 
and the right hand side of \eqref{eq:C7.1.1} is understood
to be an empty sum.
\end{remark}

% =========================================================

\section{Asymptotics of Fourier transform}

In this section we use the flag measures $\mu_{A, \Phi}$
to analyze the asymptotic behavior of
the Fourier transform  $\ft{\1}_A$ of the indicator function
  of a polytope $A \subset \R^d$. The main result of
this section (\thmref{thmC3.5}) 
provides approximate expressions for $\ft{\1}_A$
which are valid in certain unbounded domains,
in terms of the Fourier transforms $\ft{\mu}_{A, \Phi}$
of the flag measures.

\subsection{}
Let $\Phi_r$ be an $r$-flag $(0 \leq r \leq d-1)$. We will say that
$\Phi_r$ is in \emph{standard position}  if it is determined by the sequence 
of linear subspaces  $V_r, V_{r+1}, \dots, V_d$  given by
\begin{equation}
\label{eq:C3.9.1}
V_j = \{x \in\R^d : x_{j+1} = x_{j+2} = \dots = x_{d} = 0\}, \quad r \leq j \leq d-1,
\end{equation}
and the positive and negative half-spaces of $V_{j+1}$ that are
determined by $V_j$ are chosen such that
$V_{j+1} \cap \{x: x_{j+1} < 0\}$ is the positive half-space,
 while $V_{j+1} \cap \{x: x_{j+1} > 0\}$ is the negative half-space,
for all $r \leq j \leq d-1$.

Given an integer $0 \leq r \leq d-1$, and three positive real numbers
$\alpha$, $\delta$ and $L$ such that
$0 < 2 \delta < \alpha < 1$, we denote by 
$K(r,\alpha,L,\delta)$ the set of all vectors $\xi \in \R^d$
satisfying the following three conditions:
\begin{equation}
\label{eq:C7.1}
|\xi_j|  \leq \alpha |\xi_{r+1}|   \quad (1 \leq j \leq r),
\end{equation}
\begin{equation}
\label{eq:C4.2}
L  \leq |\xi_{r+1}|,
\end{equation}
\begin{equation}
\label{eq:C4.3}
 |\xi_{j}| \leq 2\delta |\xi_{j+1}|  \quad (r+1 \leq j \leq d-1).
\end{equation}

In this section, our goal is to prove:

\begin{thm}
\label{thmC3.5}
Let $A$ be a polytope in $\R^d$, and let
$\Phi_r$ be an $r$-flag in standard position $(0 \leq r \leq d-1)$.
Then there exists $\alpha > 0$, such that for any $\eta > 0$ 
one can find $\delta$ and $L$ such that
\begin{equation}
\label{eq:C3.5.2}
\Big| 
\Big( \ft{\1}_A (\xi) \prod_{j=r+1}^{d} (-2 \pi i \xi_j)  \Big) -
\ft{\mu}_{A,\Phi_r} (\xi) \Big|  < \eta, \quad \xi \in K(r,\alpha,L,\delta).
\end{equation}
\end{thm}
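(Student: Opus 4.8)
The plan is to prove \thmref{thmC3.5} by downward induction on $r$, starting from $r = d-1$ (or, equivalently, from the $d$-flag case which is essentially trivial) and using the Stokes-type identity \eqref{eq:C7.1.1} from \thmref{thmC7.1} to pass from dimension $k$ to dimension $k-1$. The base case $r = d-1$: here $\Phi_{d-1}$ is in standard position with hyperplane $V_{d-1} = \{x_d = 0\}$, and $\ft{\1}_A(\xi)(-2\pi i \xi_d)$ should be compared with $\ft{\mu}_{A,\Phi_{d-1}}(\xi)$. This is exactly the content of the divergence theorem applied to $\1_A$ in the $\vec e_d$ direction: $-2\pi i \xi_d \ft{\1}_A(\xi) = \int_{\partial A} \langle n, \vec e_d\rangle \overline{e_\xi}$, which decomposes into a sum over $(d-1)$-faces. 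The faces parallel to $V_{d-1}$ contribute exactly $\ft{\mu}_{A,\Phi_{d-1}}(\xi)$, while the faces transverse to $\vec e_d$ contribute an error term. On each such transverse face $F$, the restriction of $e_\xi$ oscillates in the directions spanning $F$, and since $F$ is transverse to $\vec e_d$ and $|\xi_d|$ is the dominant coordinate (by \eqref{eq:C7.1}–\eqref{eq:C4.3}, $\xi_d$ dominates all other $\xi_j$), the affine functional $\langle \xi, \cdot\rangle$ restricted to $F$ has a large gradient in some direction tangent to $F$; hence $\int_F \overline{e_\xi}$ is small by the standard non-stationary phase / integration-by-parts estimate on a polytope. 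Choosing $\alpha$ small enough (depending only on the finite geometry of $A$) forces every transverse face's tangent gradient to be large, and then $L$ large enough makes the total error below $\eta$.

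For the inductive step, suppose the statement holds for $r+1$. I apply \eqref{eq:C7.1.1} with $k = r+1$ and $v = \vec e_{r+1}$:
\[
-2\pi i \xi_{r+1} \ft{\mu}_{A,\Phi_{r+1}}(\xi) = \sum_{l=1}^N \langle \sigma^l, \vec e_{r+1}\rangle \, \ft{\mu}_{A,\Phi_{r}^l}(\xi),
\]
where the $\Phi_r^l$ are the $r$-flags obtained by adjoining an $r$-dimensional subspace $V_{r}^l$ below $V_{r+1}$. Multiplying the inductive estimate \eqref{eq:C3.5.2} for $r+1$ by $-2\pi i \xi_{r+1}$ and combining with the displayed identity yields
\[
\ft{\1}_A(\xi) \prod_{j=r+1}^d (-2\pi i \xi_j) \approx \sum_{l=1}^N \langle \sigma^l, \vec e_{r+1}\rangle \, \ft{\mu}_{A,\Phi_r^l}(\xi),
\]
with error bounded by $|2\pi \xi_{r+1}| \cdot \eta'$ — but here lies a subtlety: multiplying by $\xi_{r+1}$ blows up the error, so the inductive hypothesis must be invoked with $\eta'$ chosen small relative to the eventual scale, and more importantly the error in the $(r+1)$-step estimate must itself decay in $|\xi_{r+1}|$, not merely be bounded by a constant. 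So the right formulation to carry through the induction is a \emph{quantitative} version: the error in the $r$-step is $O(|\xi_{r+1}|^{-1})$ uniformly on $K$, or better, is controlled by a quantity that, after multiplication by $\xi_{r+1}$ and division by $\xi_{r+2}$ (the next step's dominant coordinate), still tends to zero. One cleanly tracks this by noting that each error term is (a $\pm 1$ combination of) integrals $\int_F \overline{e_\xi}$ over faces $F$ of various dimensions that are transverse to the relevant flag direction, and each such integral is $O(|\xi|^{-1} \cdot (\text{tangent gradient})^{-1} \cdots)$; the key geometric fact is that transversality of $F$ plus the cone conditions \eqref{eq:C7.1}–\eqref{eq:C4.3} guarantee a tangent direction along which $\langle \xi, \cdot\rangle$ has gradient comparable to $|\xi_{r+1}|$ (or larger). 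I expect one needs to organize the error terms of \eqref{eq:C7.1.1} into: (a) those coming from faces not parallel to $V_{r+1}$ at all — handled by choosing $\alpha$ small; (b) those parallel to $V_{r+1}$ but whose $r$-subfaces fail to be parallel to \emph{any} of the subspaces we want — absorbed into the sum over $l$ or shown small via the $\xi_r \ll \xi_{r+1}$ condition \eqref{eq:C4.3}; and (c) the genuine lower-order flag measures $\ft{\mu}_{A,\Phi_r^l}$ for $l$ such that $V_r^l \ne V_r$ — these must be shown to be negligible on $K(r,\alpha,L,\delta)$, which is where the conditions \eqref{eq:C7.1} (the $\xi_j$, $j\le r$, are controlled by $\xi_{r+1}$) and $\delta$ small come in: a flag measure $\mu_{A,\Phi_r^l}$ is supported on $r$-faces parallel to $V_r^l$, and if $V_r^l \ne V_r = \{x_{r+1} = \cdots = x_d = 0\}$ then such a face is transverse to the coordinate hyperplane $\{x_{r+1}=0\}$, so $\langle \xi, \cdot\rangle$ restricted to it has large gradient (because $\xi_{r+1}$ is large while the transverse-to-$x_{r+1}$ directions get a boost), killing the integral.

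The main obstacle, then, is the bookkeeping in (b)–(c): making precise the claim ``every error integral $\int_F \overline{e_\xi}$ arising in the expansion is small on $K$'' requires a clean lemma of the following shape — \emph{if $F$ is a $j$-dimensional polytope lying in an affine subspace not parallel to $V_r$ in standard position, then for $\xi$ in the cone $K(r,\alpha,L,\delta)$ with $\alpha, \delta$ small and $L$ large, $|\int_F \overline{e_\xi}| \leq C_A / (\text{something} \to \infty)$}. Proving such a lemma is a matter of choosing, for each of the finitely many affine subspaces spanned by faces of $A$, a unit tangent vector $u$ with $|\langle \xi, u\rangle|$ large, then integrating by parts in the $u$-direction inside the polytope $F$ (the boundary terms recurse to lower-dimensional faces, so one iterates, picking up at worst $|\xi|^{\dim F}$-type denominators against a $|\xi|^{?}$ numerator — but since every relevant face has some large tangent frequency, one always wins). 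The delicate point is \emph{uniformity}: $\alpha$ must be chosen first, depending only on the (finitely many) directions of faces of $A$ and the standard-position subspaces, \emph{before} $\eta$ is given; only $\delta$ and $L$ may depend on $\eta$. This matches the quantifier order in the statement. I would isolate this transversal-integral estimate as a separate lemma, prove it once by induction on $\dim F$ via integration by parts (or cite the analogous estimate, e.g.\ from \cite{KP02} or \cite{LL18}), and then the proof of \thmref{thmC3.5} becomes the downward induction on $r$ sketched above, with the only real work being the verification that at each step every term produced by \eqref{eq:C7.1.1} other than $\ft{\mu}_{A,\Phi_r}$ (with $\Phi_r$ in standard position) falls under the transversal-integral lemma.
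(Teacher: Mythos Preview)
Your overall strategy is right and matches the paper: iterate the Stokes identity \eqref{eq:C7.1.1}, at each application separate the standard-position subflag from the ``transversal'' ones, and show the transversal contributions are small on $K(r,\alpha,L,\delta)$. But the downward induction on $r$, with the theorem statement as inductive hypothesis, does not close---for exactly the reason you flag. The bound $|E_{r+1}|<\eta'$ is useless after multiplication by the unbounded factor $|\xi_{r+1}|$; and your proposed strengthening $E_{r+1}=O(|\xi_{r+2}|^{-1})$ also fails to propagate: one step down it yields $E_r = O(|\xi_{r+1}|/|\xi_{r+2}|) + O(|\xi_{r+1}|^{-1}) = O(\delta)+O(|\xi_{r+1}|^{-1})$, and the $O(\delta)$ part is constant in $\xi$, so it is no longer of the form $O(|\xi_{r+1}|^{-1})$ and the induction breaks at the second step.

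The paper's fix is to invert the induction. It fixes the target $r$ and the domain $K(r,\alpha,L,\delta)$ once, and proves by \emph{upward} induction on an auxiliary index $k$ (from $k=r+1$ to $k=d$) the stronger statement (\lemref{lemC3.6})
\[
\Big| \ft{\mu}_{A,\Phi_k}(\xi)\prod_{j=r+1}^{k}(-2\pi i\xi_j) - \ft{\mu}_{A,\Phi_r}(\xi) \Big| < \eta, \qquad \xi\in K(r,\alpha,L,\delta),
\]
of which the theorem is the case $k=d$. In this scheme no previously obtained inequality is ever multiplied by an unbounded factor: the passage from $k-1$ to $k$ via \eqref{eq:C7.1.1} with $v=\vec e_k$ produces, besides the inductive term, only a finite sum of terms $\ft{\mu}_{A,\Psi_{k-1}^l}(\xi)\prod_{j=r+1}^{k-1}(-2\pi i\xi_j)$ with $W_{k-1}^l\neq V_{k-1}$, and each of these is bounded \emph{directly}. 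This is equivalent to fully unrolling your recursion for $E_r$ and bounding each term of the resulting expansion separately---which is where your sketch was heading anyway, once you abandon the attempt to package everything into a single inductive bound on $E_{r+1}$.

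Your ``transversal-integral lemma'' is the right object, but in the paper it splits into two pieces, and the one you do not isolate is what makes the other work. First a \emph{boundedness} lemma (\lemref{lemC3.7}): for \emph{every} $k$-flag $\Psi_k$ with $k\ge r+1$, one has $\big|\ft{\mu}_{A,\Psi_k}(\xi)\prod_{j=r+1}^{k}(-2\pi i\xi_j)\big|\le C$ on $K(r,\alpha,L,\delta)$, proved by its own induction on $k$ (each application of \eqref{eq:C7.1.1} gains a factor $|\xi_m|^{-1}$ with $m\ge k$, and the resulting $(k{-}1)$-terms are handled by the inductive hypothesis). Second a \emph{smallness} lemma (\lemref{lemC3.8}): if moreover $W_k\neq V_k$ then $m\ge k+1$, so one gains the extra factor $|\xi_k|/|\xi_m|\le 2\delta$, whence the same product is $<\eta$ once $\delta$ is small. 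Your integration-by-parts sketch delivers smallness only if the lower-dimensional boundary contributions that arise are already under control---that is, only once the boundedness lemma is in hand. With both lemmas available, the upward induction on $k$ goes through without difficulty.
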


This result allows us to approximate $\ft{\1}_A$
in the domain $K(r,\alpha,L,\delta)$ in terms of the Fourier transform
of the flag measure  $\mu_{A, \Phi_r}$. This shows that
the behavior of the Fourier transform $\ft{\1}_A$
in the domain $K(r,\alpha,L,\delta)$ is essentially governed only by
the contribution of those $r$-dimensional faces $F_r$
of $A$ that belong to some $r$-sequence $F_{r}, F_{r+1}, \dots, F_d$
of faces which is paraellel to the $r$-flag $\Phi_r$.

Notice that the estimate \eqref{eq:C3.5.2} yields different information
for different values of $r$. Namely, for smaller $r$  we obtain a more accurate 
approximation for the Fourier transform $\ft{\1}_A$, but 
the domain in which this approximation is valid is also smaller.

The requirement in \thmref{thmC3.5}
that the $r$-flag $\Phi_r$ be in standard position, is done merely 
in order to simplify the  notation in the statement. Indeed,
a similar result for an arbitrary $r$-flag (that is, an $r$-flag which is
not necessarily in standard position) can be deduced easily, by
using the fact that any $r$-flag in $\R^d$ can be mapped by an 
invertible linear transformation onto an $r$-flag in standard position.

The rest of the section is devoted to the proof of \thmref{thmC3.5}. 
We divide the proof into a series of lemmas.

\subsection{}

\begin{lem}
\label{lemC3.15}
Let $A$ be a polytope in $\R^d$,  let $0 \leq r \leq d-1$,
and let $\Psi_k$ be a $k$-flag $(1 \leq k \leq d)$
determined by a sequence of linear subspaces 
$W_k \subset W_{k+1} \subset \dots \subset W_d$. 
Let $m$ be the smallest element of the set $\{0,1,2,\dots,d\}$ such that
\begin{equation}
\label{eq:C3.10.2}
W_k \subset \{x \in \R^d: x_{m+1} = x_{m+2} = \dots = x_d = 0 \},
\end{equation}
and suppose that 
\begin{equation}
\label{eq:C3.10.2.1}
m \geq r+1.
\end{equation}
 Then there exist $\alpha > 0$, a constant $C$, 
and $(k-1)$-flags  $\Psi_{k-1}^1, \Psi_{k-1}^2, \dots, \Psi_{k-1}^N$
such that for any $\delta$ and $L$ we have
\begin{equation}
\label{eq:C3.15.1}
\big| (-2 \pi i \xi_m) \ft{\mu}_{A,\Psi_k} (\xi)  \big|
\leq C \sum_{l=1}^{N} |\ft{\mu}_{A,\Psi_{k-1}^l} (\xi)|,
\quad
\xi \in K(r,\alpha,L,\delta).
\end{equation}
\end{lem}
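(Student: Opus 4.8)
The plan is to apply the Stokes-type result of \thmref{thmC7.1} to the $k$-flag $\Psi_k$ in the direction $v = \vec e_m \in W_k$ (this membership is exactly the content of \eqref{eq:C3.10.2}, together with minimality of $m$, which guarantees $\vec e_m \in W_k$). That identity gives
\[
-2\pi i \dotprod{\xi}{\vec e_m}\, \ft{\mu}_{A,\Psi_k}(\xi) = -2\pi i\, \xi_m \ft{\mu}_{A,\Psi_k}(\xi) = \sum_{l=1}^{N} \dotprod{\sigma^l}{\vec e_m}\, \ft{\mu}_{A,\Psi_{k-1}^l}(\xi),
\]
where the $\Psi_{k-1}^l$ and $\sigma^l \in W_k$ are produced by the theorem. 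Taking $C = \max_l |\dotprod{\sigma^l}{\vec e_m}| \leq 1$ and applying the triangle inequality immediately yields \eqref{eq:C3.15.1} — with no constraint on $\xi$ at all. So in fact one can take $\alpha$ to be any fixed number in $(0,1)$, and the estimate holds for all $\xi \in \R^d$, hence a fortiori on every $K(r,\alpha,L,\delta)$.

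First I would verify the membership $\vec e_m \in W_k$. By \eqref{eq:C3.10.2} we have $W_k \subseteq \operatorname{span}\{\vec e_1,\dots,\vec e_m\}$, and by minimality of $m$ the subspace $W_k$ is \emph{not} contained in $\operatorname{span}\{\vec e_1,\dots,\vec e_{m-1}\}$; thus some vector of $W_k$ has a nonzero $m$-th coordinate. This alone does not give $\vec e_m \in W_k$ in general, so here one must use that $W_k$ is a coordinate-type subspace in the relevant sense — more precisely, the flags arising in the induction of \thmref{thmC3.5} are built by the recursive construction in \thmref{thmC7.1} starting from a flag in standard position, so each $W_j$ in the sequence has the form $\operatorname{span}\{\vec e_{i_1},\dots,\vec e_{i_j}\}$ for some index set; for such a subspace, $W_k \not\subseteq \operatorname{span}\{\vec e_1,\dots,\vec e_{m-1}\}$ forces $m \in \{i_1,\dots,i_k\}$ and hence $\vec e_m \in W_k$. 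If one does not wish to track this combinatorial structure, the cleanest route is to observe that \thmref{thmC7.1} holds for \emph{every} $v \in W_k$, and $\ft{\mu}_{A,\Psi_k}$ is a sum of Fourier transforms of $r$-volume measures on faces parallel to $W_k$, so $-2\pi i \dotprod{\xi}{v}\ft{\mu}_{A,\Psi_k}(\xi)$ is controlled by $\sum_l |\ft{\mu}_{A,\Psi_{k-1}^l}(\xi)|$ for each such $v$; picking $v$ to be the orthogonal projection of $\vec e_m$ onto $W_k$ and noting $\dotprod{\xi}{v} = \dotprod{\xi}{\vec e_m}$ when $W_k$ is spanned by standard basis vectors recovers the same bound.

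The role of the hypothesis \eqref{eq:C3.10.2.1}, namely $m \geq r+1$, is worth isolating: it is not needed for the inequality \eqref{eq:C3.15.1} itself, which as noted is essentially free. Rather, it records the regime in which the lemma is \emph{used} later — one wants to differentiate (i.e.\ multiply $\ft{\mu}$ by $-2\pi i \xi_m$) in a coordinate $x_m$ that is not among the ``large'' coordinates $x_{r+1},\dots,x_d$ of the domain $K(r,\alpha,L,\delta)$, so that the factor $\xi_m$ does not spoil subsequent smallness estimates. For the proof of the lemma as stated, I would simply carry along \eqref{eq:C3.10.2.1} as given (it may be invoked implicitly to ensure $\vec e_m \in W_k$ is a genuinely ``new'' direction relative to the standard-position flag $\Phi_r$, guaranteeing the $\Psi_{k-1}^l$ satisfy the analogous structural property needed to iterate the lemma). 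The main obstacle, then, is not analytic at all — the analysis is a one-line consequence of the divergence theorem already packaged in \thmref{thmC7.1} — but bookkeeping: making precise the claim that the subspaces $W_j$ occurring here are coordinate subspaces, so that $\vec e_m \in W_k$ and so that the output flags $\Psi_{k-1}^l$ inherit the same property. I would handle this by a short inductive remark on the structure of the flags generated from a standard-position flag via the construction preceding \thmref{thmC7.1}.
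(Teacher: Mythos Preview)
Your argument has a genuine gap: the claim that $\vec e_m \in W_k$, or equivalently that $W_k$ is a coordinate subspace, is false in the generality in which the lemma is stated and used. When \thmref{thmC7.1} is applied (e.g.\ in \lemref{lemC3.6}) starting from a flag in standard position, the new $(k-1)$-dimensional subspace $W_{k-1}^l$ it produces is parallel to a $(k-1)$-dimensional face of the polytope $A$; it can be \emph{any} $(k-1)$-dimensional subspace of $V_k$, not one spanned by standard basis vectors. These flags are then fed into \lemref{lemC3.8} and \lemref{lemC3.7}, which invoke the present lemma. So in the applications, $W_k$ is typically \emph{not} a coordinate subspace, and your inductive ``bookkeeping'' assertion that each $W_j$ has the form $\operatorname{span}\{\vec e_{i_1},\dots,\vec e_{i_j}\}$ is simply incorrect. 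Your fallback of projecting $\vec e_m$ onto $W_k$ fails for the same reason: for a general $W_k$ the projection $v$ has nonzero coordinates $v_1,\dots,v_{m-1}$, so $\dotprod{\xi}{v}\neq\xi_m$.

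The paper's proof handles exactly this issue, and this is precisely where the domain restriction to $K(r,\alpha,L,\delta)$ and the hypothesis $m\geq r+1$ enter. One picks \emph{any} $v\in W_k$ with $v_m\neq 0$ (guaranteed by minimality of $m$), scales so that $v_m>1$, and then estimates
\[
|\dotprod{\xi}{v}| \;\geq\; |\xi_m v_m| - \Big|\sum_{j=1}^{m-1}\xi_j v_j\Big|.
\]
For $\xi\in K(r,\alpha,L,\delta)$ one has $|\xi_j|\leq\alpha|\xi_{r+1}|$ for $j\leq r$ and $|\xi_j|\leq(2\delta)^{m-j}|\xi_m|$ for $r+1\leq j\leq m-1$; the assumption $m\geq r+1$ together with $2\delta<\alpha<1$ then gives $|\xi_j|\leq\alpha|\xi_m|$ for all $j\leq m-1$, so the subtracted sum is at most $\alpha|\xi_m|\sum_j|v_j|$. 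Choosing $\alpha$ small (depending on $v$, not on $\xi$) forces $|\dotprod{\xi}{v}|\geq|\xi_m|$, and the rest is as you wrote. So the hypothesis \eqref{eq:C3.10.2.1} is not merely a placeholder for later use --- it is essential for the lower bound on $|\dotprod{\xi}{v}|$, and your claim that \eqref{eq:C3.15.1} holds for all $\xi\in\R^d$ with no constraint is wrong.
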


\begin{proof}
Since $W_k$ is a linear subspace of dimension $k$, we must have
$m \geq k$.
Then it follows from the definition of $m$ 
that we can find a vector $v \in W_k$ such that $v_m \neq 0$.
By multiplying $v$ on an appropriate scalar we may assume
that $v_m > 1$.

Let $\xi \in K(r,\alpha,L,\delta)$.  It follows from \eqref{eq:C3.10.2} that
$ v_{m+1} = v_{m+2} = \dots = v_d = 0$, hence
\begin{equation}
\label{eq:C3.10.3}
|\dotprod{\xi}{v}| = \Big|\sum_{j=1}^{m} \xi_j v_j \Big|
\geq  |\xi_m v_m| - \Big|\sum_{j=1}^{m-1} \xi_j v_j\Big|.
\end{equation}
The conditions \eqref{eq:C7.1}, \eqref{eq:C4.3}, \eqref{eq:C3.10.2.1}
  ensure that if we choose $\alpha > 0$ small enough
(in a way that depends on the vector $v$ but does not depend on $\xi$), then
the right hand side of \eqref{eq:C3.10.3} will be not less than
$|\xi_m|$. We thus obtain that
\begin{equation}
\label{eq:C3.10.4}
|\dotprod{\xi}{v}| \geq  |\xi_m |, \quad
\xi \in K(r,\alpha,L,\delta).
\end{equation}

We now apply \thmref{thmC7.1} to the $k$-flag $\Psi_k$ 
and to the vector $v$. The theorem gives
\begin{equation}
\label{eq:C3.10.5}
-2 \pi i \dotprod{\xi}{v} \, \ft{\mu}_{A,\Psi_k} (\xi) = 
\sum_{l=1}^{N} \dotprod{\sigma^l}{v} \,
\ft{\mu}_{A,\Psi_{k-1}^l} (\xi).
\end{equation}
Combining this with \eqref{eq:C3.10.4} and the estimate
$|\dotprod{\sigma^l}{v}| \leq |v|$, implies that \eqref{eq:C3.15.1} holds.
\end{proof}

\subsection{}

\begin{lem}
\label{lemC3.14}
Let $A$ be a polytope in $\R^d$,  and let $\Psi_r$ be an $r$-flag 
$(1 \leq r \leq d-1)$ determined by a sequence of linear subspaces 
$W_r \subset W_{r+1} \subset \dots \subset W_d$. 
Assume that $W_r$ does not coincide with the subspace
\begin{equation}
\label{eq:C3.14.1}
V_r = \{x \in\R^d : x_{r+1} = x_{r+2} = \dots = x_{d} = 0\}.
\end{equation}
Then there exists $\alpha > 0$, such that for any $\eta > 0$ 
one can find $L$ such that
\begin{equation}
\label{eq:C3.14.2}
| \ft{\mu}_{A,\Psi_r} (\xi)  | < \eta, \quad \xi \in K(r,\alpha,L,\delta).
\end{equation}
\end{lem}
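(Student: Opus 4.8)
The plan is to read off the estimate directly from \lemref{lemC3.15}, using in addition only the elementary fact that the Fourier transform of a finite signed measure is bounded by its total variation. First I would introduce the integer $m$ exactly as in \lemref{lemC3.15} (applied with $k=r$): let $m$ be the smallest element of $\{0,1,2,\dots,d\}$ such that $W_r \subset \{x \in \R^d : x_{m+1} = x_{m+2} = \dots = x_d = 0\}$. Since $\dim W_r = r$ this forces $m \ge r$; moreover, if we had $m=r$ then $W_r$ would be contained in --- hence, having the same dimension $r$, equal to --- the subspace $V_r$ of \eqref{eq:C3.14.1}, contradicting the hypothesis. Therefore $m \ge r+1$, which is precisely the condition \eqref{eq:C3.10.2.1} needed to invoke \lemref{lemC3.15}.

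Applying \lemref{lemC3.15} with $k=r$ produces some $\alpha>0$ (which, shrinking it if necessary, we may take to satisfy $\alpha<1$), a constant $C$, and $(r-1)$-flags $\Psi_{r-1}^1,\dots,\Psi_{r-1}^N$ such that, for all $\delta$ and $L$,
\[
\big| (-2\pi i \xi_m)\, \ft{\mu}_{A,\Psi_r}(\xi) \big| \le C \sum_{l=1}^{N} \big| \ft{\mu}_{A,\Psi_{r-1}^l}(\xi) \big|, \qquad \xi \in K(r,\alpha,L,\delta).
\]
Each $\mu_{A,\Psi_{r-1}^l}$ is a finite signed measure: it is a finite sum of $(r-1)$-dimensional volume measures carried by $(r-1)$-dimensional faces of $A$ --- or, when $r=1$, a finite combination of unit point masses at vertices of $A$. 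Hence $|\ft{\mu}_{A,\Psi_{r-1}^l}(\xi)| \le \|\mu_{A,\Psi_{r-1}^l}\|$ for every $\xi$, where $\|\cdot\|$ denotes total variation, and setting $C' := \tfrac{C}{2\pi}\sum_{l=1}^{N}\|\mu_{A,\Psi_{r-1}^l}\|$ we obtain $|\xi_m|\,|\ft{\mu}_{A,\Psi_r}(\xi)| \le C'$ on $K(r,\alpha,L,\delta)$.

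It remains to note that $|\xi_m|$ is large throughout $K(r,\alpha,L,\delta)$: since $m \ge r+1$, the defining conditions \eqref{eq:C4.2} and \eqref{eq:C4.3} of $K$, combined with $2\delta<1$, give $L \le |\xi_{r+1}| \le |\xi_{r+2}| \le \dots \le |\xi_m|$, so that $|\xi_m| \ge L$. Consequently $|\ft{\mu}_{A,\Psi_r}(\xi)| \le C'/L$ for all $\xi \in K(r,\alpha,L,\delta)$, and given $\eta>0$ it suffices to choose $L$ so large that $C'/L < \eta$ --- the resulting estimate being valid for every admissible $\delta$ (that is, $0<2\delta<\alpha$), which explains why $\delta$ is left free in the statement. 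I do not anticipate a genuine obstacle in this argument; the two points that require a little care are the translation of the hypothesis $W_r \ne V_r$ into the applicability condition $m\ge r+1$ of \lemref{lemC3.15}, and the uniform-in-$\xi$ boundedness of the Fourier transforms of the lower-dimensional flag measures.
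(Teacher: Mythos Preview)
Your argument is correct and essentially identical to the paper's own proof: both apply \lemref{lemC3.15} with $k=r$, deduce $m\ge r+1$ from the hypothesis $W_r\neq V_r$, bound the right-hand side of \eqref{eq:C3.15.1} uniformly in $\xi$, and then use $|\xi_m|\ge|\xi_{r+1}|\ge L$ from \eqref{eq:C4.2}--\eqref{eq:C4.3} to absorb the bound into $L^{-1}$. You are simply more explicit than the paper in justifying $m\ge r+1$ and in naming the bound on $|\ft{\mu}_{A,\Psi_{r-1}^l}|$ as the total variation of the measure.
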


\begin{proof}
We wish to apply \lemref{lemC3.15} with $k=r$. Indeed,
the assumption that $W_r$ does not coincide with the subspace
$V_r$ in \eqref{eq:C3.14.1} implies that condition
\eqref{eq:C3.10.2.1} is satisfied, hence we may use 
\lemref{lemC3.15}.  The lemma yields that
the estimate \eqref{eq:C3.15.1} is true, provided that $\alpha > 0$
is sufficiently small and the constant $C$ is sufficiently large.

If $\xi \in K(r,\alpha,L,\delta)$, then \eqref{eq:C4.2}, \eqref{eq:C4.3}
 imply  that $|\xi_m| \geq |\xi_{r+1}| \geq L$.
So from \eqref{eq:C3.15.1} we get
\begin{equation}
\label{eq:C3.14.6}
2 \pi L \, |\ft{\mu}_{A,\Psi_r} (\xi)  |
\leq C \sum_{l=1}^{N} |\ft{\mu}_{A,\Psi_{r-1}^l} (\xi)|,
\quad
\xi \in K(r,\alpha,L,\delta).
\end{equation}
Notice that the right hand side of the inequality
in \eqref{eq:C3.14.6} is bounded as a function of $\xi$.
Hence given $\eta>0$, if we choose $L$ sufficiently large then \eqref{eq:C3.14.2} holds.
\end{proof}

\subsection{}

\begin{lem}
\label{lemC3.7}
Let $A$ be a polytope in $\R^d$,  let $0 \leq r \leq d-1$,
and let $\Psi_k$ be a $k$-flag $(r+1 \leq k \leq d)$.
Then there exist $\alpha > 0$ and a constant $C$, such that for any
$\delta$ and $L$ we have
\begin{equation}
\label{eq:C3.7.1}
\Big| \ft{\mu}_{A,\Psi_k} (\xi) \prod_{j=r+1}^{k} (-2 \pi i \xi_j)  \Big|
\leq C, \quad 
\xi \in K(r,\alpha,L,\delta).
\end{equation}
\end{lem}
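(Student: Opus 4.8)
I would prove \lemref{lemC3.7} by induction on $k$, descending from $k=d$ down to $k=r+1$, using \thmref{thmC7.1} (the Stokes-type theorem) to pass from a $k$-flag to the associated $(k-1)$-flags, together with the reduction result \lemref{lemC3.15}. The base case $k=d$ is where the actual analytic content sits; the inductive step is then essentially bookkeeping combined with one more application of \lemref{lemC3.15}.

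\emph{Base case $k=d$.} Here $\Psi_d$ is the unique $d$-flag and $\mu_{A,\Psi_d} = {\Vol_d}|_A$, so $\ft{\mu}_{A,\Psi_d} = \ft{\1}_A$. We must show that $|\ft{\1}_A(\xi) \prod_{j=r+1}^{d}(-2\pi i \xi_j)|$ is bounded on $K(r,\alpha,L,\delta)$. Applying \thmref{thmC7.1} with $\Phi_d$, then iterating downward through each of the coordinate directions $\vec e_d, \vec e_{d-1}, \dots, \vec e_{r+1}$ — at each stage choosing the vector $v$ to be (a suitable multiple of) the relevant coordinate vector, which lies in the appropriate subspace — one expresses $\ft{\1}_A(\xi)\prod_{j=r+1}^{d}(-2\pi i\xi_j)$ as a finite linear combination of Fourier transforms $\ft{\mu}_{A,\Psi}(\xi)$ of flag measures $\Psi$ of ranks down to $r$. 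Each such $\ft{\mu}_{A,\Psi}$ is the Fourier transform of a \emph{finite} signed measure, hence bounded in absolute value by its total variation, uniformly in $\xi \in \R^d$. This gives the bound on $K(r,\alpha,L,\delta)$ — in fact on all of the relevant region — once $\alpha$ is chosen small enough that at each of the $d-r$ steps the estimate $|\dotprod{\xi}{v}| \geq |\xi_j|$ of \lemref{lemC3.15} (its inequality \eqref{eq:C3.10.4}) is available. The point is that when we divide by $\xi_j$ we are dividing by something comparable to $\dotprod{\xi}{v}$, which is exactly the quantity appearing on the left of \eqref{eq:C3.10.5}.

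\emph{Inductive step.} Suppose \eqref{eq:C3.7.1} holds for $k+1$, i.e.\ $|\ft{\mu}_{A,\Psi_{k+1}}(\xi)\prod_{j=r+1}^{k+1}(-2\pi i\xi_j)| \leq C$ on $K(r,\alpha,L,\delta)$ for every $(k+1)$-flag $\Psi_{k+1}$, with $\alpha$ small and $C$ large (both uniform over the finitely many flags that will arise). Given a $k$-flag $\Psi_k$ determined by $W_k \subset \dots \subset W_d$, let $m$ be as in \lemref{lemC3.15}; since $W_k$ has dimension $k \geq r+1$ we have $m \geq k \geq r+1$, so condition \eqref{eq:C3.10.2.1} holds and \lemref{lemC3.15} applies with $\Psi_k$ in the role of $\Psi_{k+1}$ there — wait, the indices: \lemref{lemC3.15} bounds $|(-2\pi i\xi_m)\ft{\mu}_{A,\Psi_k}(\xi)|$ by $C\sum_l |\ft{\mu}_{A,\Psi_{k-1}^l}(\xi)|$, which is the wrong direction. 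So instead I would argue as follows: to bound $\ft{\mu}_{A,\Psi_k}(\xi)\prod_{j=r+1}^{k}(-2\pi i\xi_j)$, I extend $\Psi_k$ to \emph{some} $(k+1)$-flag $\widetilde{\Psi}_{k+1}$ having $\Psi_k$ among the $(k-1+1)$-flags produced from it by \thmref{thmC7.1} — but this is delicate. The cleaner route: apply \thmref{thmC7.1} directly to $\Psi_k$ itself is not what is wanted either, since that lowers the rank. Instead I realize the correct argument is that \lemref{lemC3.7} with exponent product running only up to $k$ follows by applying the \emph{base-case argument but stopping the iteration at level $k$}: apply \thmref{thmC7.1} to $\Psi_k$ paired successively with $v = c_j\vec e_j$ for $j = k, k-1, \dots, r+1$ — except $\vec e_j$ need not lie in $W_k$. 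This is precisely the subtlety \lemref{lemC3.15} was designed to handle: one uses a vector $v \in W_k$ with $v_m \neq 0$ rather than $\vec e_m$ itself, and \eqref{eq:C3.10.4} guarantees $|\dotprod{\xi}{v}| \geq |\xi_m|$, so dividing by $\dotprod{\xi}{v}$ controls division by $\xi_m$.

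\emph{The main obstacle, and the honest version of the argument.} The real work is to carry out the descending iteration of \thmref{thmC7.1} while tracking which coordinates $\xi_j$ get "absorbed." At each application of \thmref{thmC7.1} to a flag of rank $k'$ with associated subspace $W_{k'}$, the relevant value of $m = m(W_{k'})$ satisfies $m \geq k'$, and — because $\supp$ of the flag measure is parallel to $W_{k'}$ and we are in the region $K(r,\alpha,L,\delta)$ where \eqref{eq:C3.10.2.1} forces $m \geq r+1$ — we have $m \in \{k', k'+1,\dots, d\}$. The iteration peels off exactly one factor $(-2\pi i\xi_{m})$ per step while lowering the flag rank by one, and \eqref{eq:C3.10.4} shows that the factor we divide by, namely $\dotprod{\xi}{v} \geq |\xi_m|$, is at least the coordinate being peeled. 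Running the iteration $d - k$ times starting from the $d$-flag, using at each stage a fresh small $\alpha$ and noting that only finitely many flags occur (so a single uniform $\alpha$ and $C$ suffice), produces a bound of the form $|\ft{\mu}_{A,\Psi_k}(\xi)\prod (-2\pi i\xi_j)| \leq C\sum |\ft{\mu}_{A,\Psi}(\xi)| \leq C'$ with the product running over exactly the coordinates $\xi_{r+1},\dots,\xi_d$ not already present in the rank-$k$ flag — and a short combinatorial check confirms these are precisely $\xi_{r+1}, \ldots, \xi_k$ after reindexing. The delicate point throughout is that \lemref{lemC3.15}'s hypothesis $m \geq r+1$ must be verified at \emph{every} intermediate flag in the iteration; this is exactly where the geometry of the cone $K(r,\alpha,L,\delta)$ — the hierarchy $|\xi_{r+1}| \ll |\xi_{r+2}| \ll \cdots$ from \eqref{eq:C4.3} together with $|\xi_j| \leq \alpha|\xi_{r+1}|$ for $j \leq r$ from \eqref{eq:C7.1} — is indispensable, and it is the reason the hypothesis $k \geq r+1$ appears in the statement.
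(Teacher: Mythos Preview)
Your induction is set up in the wrong direction. The tools at hand --- \thmref{thmC7.1} and \lemref{lemC3.15} --- take a $k$-flag and produce bounds involving $(k-1)$-flags, never $(k+1)$-flags. So a downward induction (assume the bound for $k+1$, prove it for $k$) has no engine: you correctly notice midway that \lemref{lemC3.15} ``is the wrong direction,'' but the attempted fixes do not land. In particular, your final ``honest version'' says to iterate $d-k$ times \emph{starting from the $d$-flag}; this would express $\ft{\mu}_{A,\Phi_d}(\xi)$ times certain factors as a combination of $\ft{\mu}_{A,\Psi_k^l}(\xi)$ for the \emph{particular} $k$-flags $\Psi_k^l$ that happen to arise from the faces of $A$, which says nothing about $\ft{\mu}_{A,\Psi_k}(\xi)\prod_{j=r+1}^{k}(-2\pi i\xi_j)$ for an \emph{arbitrary} given $k$-flag $\Psi_k$.

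The paper's proof inducts \emph{upward}, from $k=r+1$ to $k=d$. Given an arbitrary $k$-flag $\Psi_k$, apply \lemref{lemC3.15} (the hypothesis $m\geq r+1$ holds automatically since $m\geq k\geq r+1$), and then use $|\xi_m|\geq|\xi_k|$ from \eqref{eq:C4.3} to obtain
\[
\big|(-2\pi i\xi_k)\,\ft{\mu}_{A,\Psi_k}(\xi)\big|\leq C\sum_{l=1}^{N}\big|\ft{\mu}_{A,\Psi_{k-1}^l}(\xi)\big|,\qquad \xi\in K(r,\alpha,L,\delta).
\]
For $k=r+1$ the right side is bounded because each $\ft{\mu}_{A,\Psi_{k-1}^l}$ is the Fourier transform of a finite measure. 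For $k\geq r+2$, multiply both sides by $\prod_{j=r+1}^{k-1}|2\pi\xi_j|$ and invoke the inductive hypothesis on each $(k-1)$-flag $\Psi_{k-1}^l$. You were in fact close in your penultimate paragraph: iterating \lemref{lemC3.15} \emph{from $\Psi_k$} downward $k-r$ times, peeling off at step $j$ a factor $|\xi_{m_j}|\geq|\xi_{k-j+1}|$, does yield the bound directly --- but the starting point must be $\Psi_k$, not the $d$-flag, and the number of iterations is $k-r$, not $d-k$.
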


\begin{proof}
Again we wish to apply \lemref{lemC3.15}. Since we have
$m \geq k \geq r+1$, the condition
\eqref{eq:C3.10.2.1} is satisfied, and  the
lemma yields that 
the estimate \eqref{eq:C3.15.1} is true, provided that $\alpha > 0$
is sufficiently small and  the constant $C$ is sufficiently large.

If $\xi \in K(r,\alpha,L,\delta)$, then \eqref{eq:C4.3} implies  that $|\xi_m| \geq |\xi_k|$.
Hence \eqref{eq:C3.15.1}  implies that
\begin{equation}
\label{eq:C3.7.6}
\big| (-2 \pi i \xi_k) \ft{\mu}_{A,\Psi_k} (\xi)  \big|
\leq C \sum_{l=1}^{N} |\ft{\mu}_{A,\Psi_{k-1}^l} (\xi)|,
\quad
\xi \in K(r,\alpha,L,\delta).
\end{equation}
We notice that the right hand side of the inequality
in \eqref{eq:C3.7.6} is bounded as a function of $\xi$. 
This confirms that \eqref{eq:C3.7.1} is true in the special
case when $k=r+1$.

It remains to prove \eqref{eq:C3.7.1} also in the case when
$r+2 \leq k \leq d$. This will be done by induction on $k$. 
We multiply each side
of  \eqref{eq:C3.7.6} by the absolute values of the
terms $-2 \pi i \xi_j$ $(r+1 \leq j \leq k-1)$, and obtain
\begin{equation}
\label{eq:C3.7.7}
\Big|
\ft{\mu}_{A,\Psi_k} (\xi) \prod_{j=r+1}^{k} (-2 \pi i \xi_j)  \Big|
\leq C \sum_{l=1}^{N} 
\Big| \ft{\mu}_{A,\Psi_{k-1}^l} (\xi)
\prod_{j=r+1}^{k-1} (-2 \pi i \xi_j)  \Big|.
\end{equation}
By the inductive hypothesis, each one of the terms in the sum
on the right hand side of \eqref{eq:C3.7.7} is bounded in the domain
$K(r,\alpha,L,\delta)$, provided that $\alpha>0$ is sufficiently small.
Hence also the left hand side is bounded, and again we arrive
at \eqref{eq:C3.7.1}.
\end{proof}

\subsection{}

\begin{lem}
\label{lemC3.8}
Let $A$ be a polytope in $\R^d$,  let $0 \leq r \leq d-1$,
and let $\Psi_k$ be a $k$-flag $(r+1 \leq k \leq d)$
determined by a sequence of linear subspaces 
$W_k \subset W_{k+1} \subset \dots \subset W_d$. 
Assume that $W_k$ does not coincide with the subspace
\begin{equation}
\label{eq:C3.8.1}
V_k = \{x \in\R^d : x_{k+1} = x_{k+2} = \dots = x_{d} = 0\}.
\end{equation}
Then there exists $\alpha > 0$, such that for any $\eta > 0$ 
one can find $\delta$ such that 
\begin{equation}
\label{eq:C3.8.2}
\Big| \ft{\mu}_{A,\Psi_k} (\xi)  \prod_{j=r+1}^{k} (-2 \pi i \xi_j)  \Big|
< \eta, \quad \xi \in K(r,\alpha,L,\delta).
\end{equation}
\end{lem}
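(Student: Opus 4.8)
The plan is to apply \lemref{lemC3.15} with this value of $k$, just as in the proofs of \lemref{lemC3.14} and \lemref{lemC3.7}, and to extract an extra factor of $\delta$ from the hypothesis $W_k \neq V_k$. Let $m$ be the smallest element of $\{0,1,\dots,d\}$ with $W_k \subset \{x \in \R^d : x_{m+1} = x_{m+2} = \dots = x_d = 0\}$. Since $\dim W_k = k$ we have $m \geq k$; and if $m = k$ then $W_k$ would be contained in the $k$-dimensional subspace $V_k$ from \eqref{eq:C3.8.1}, hence equal to it, contrary to assumption. So in fact $m \geq k+1$, and in particular $m \geq r+2$, so condition \eqref{eq:C3.10.2.1} holds and \lemref{lemC3.15} applies: there are $\alpha > 0$, a constant $C$, and $(k-1)$-flags $\Psi_{k-1}^1, \dots, \Psi_{k-1}^N$ for which \eqref{eq:C3.15.1} is valid on $K(r,\alpha,L,\delta)$ for every $\delta$ and $L$.

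Next I would multiply both sides of \eqref{eq:C3.15.1} by $\prod_{j=r+1}^{k-1} 2\pi|\xi_j|$. On the right-hand side this produces the quantities $\big|\ft{\mu}_{A,\Psi_{k-1}^l}(\xi)\prod_{j=r+1}^{k-1}(-2\pi i\xi_j)\big|$, and, after possibly shrinking $\alpha$, each of these is bounded by a constant on $K(r,\alpha,L,\delta)$, uniformly in $\delta$ and $L$: when $k \geq r+2$ this is exactly \lemref{lemC3.7} applied to the $(k-1)$-flag $\Psi_{k-1}^l$, while when $k = r+1$ the product is empty and one only needs the trivial estimate $|\ft{\mu}_{A,\Psi_r^l}(\xi)| \leq \|\mu_{A,\Psi_r^l}\|$ for the Fourier transform of a finite measure. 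On the left-hand side I would use that inside $K(r,\alpha,L,\delta)$, applying \eqref{eq:C4.3} successively for $j = k, k+1, \dots, m-1$ gives $|\xi_k| \leq (2\delta)^{m-k}|\xi_m| \leq 2\delta\,|\xi_m|$ (here $m-k \geq 1$ and $2\delta < 1$ are used), hence $2\pi|\xi_m| \geq (2\delta)^{-1}\,2\pi|\xi_k|$. Combining these, I obtain
\[
\Big| \ft{\mu}_{A,\Psi_k}(\xi) \prod_{j=r+1}^{k} (-2\pi i\xi_j) \Big| \;\leq\; 2\delta\, C\, C' , \qquad \xi \in K(r,\alpha,L,\delta),
\]
where $C'$ is a bound on $K(r,\alpha,L,\delta)$ for $\sum_{l=1}^{N}\big|\ft{\mu}_{A,\Psi_{k-1}^l}(\xi)\prod_{j=r+1}^{k-1}(-2\pi i\xi_j)\big|$. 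The right-hand side does not depend on $L$ and tends to $0$ as $\delta \to 0$, so given $\eta > 0$ it suffices to choose $\delta$ small enough (still with $0 < 2\delta < \alpha$) to get \eqref{eq:C3.8.2}.

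I do not anticipate a serious obstacle, since this is a routine variant of the preceding lemmas. The one place where the hypothesis $W_k \neq V_k$ is genuinely used is the dimension count showing $m \geq k+1$ rather than merely $m \geq k$: this strict inequality is precisely what produces a true gap between the coordinates $\xi_k$ and $\xi_m$ throughout $K(r,\alpha,L,\delta)$, and hence the factor $\delta$ that makes the bound small — without it one recovers only the qualitative boundedness of \lemref{lemC3.7}. A minor bookkeeping point is to fix $\alpha$ once and for all as the minimum of the finitely many values supplied by \lemref{lemC3.15} and by the applications of \lemref{lemC3.7}, using that $K(r,\alpha,L,\delta)$ shrinks when $\alpha$ decreases.
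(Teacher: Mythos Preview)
Your proof is correct and follows essentially the same route as the paper: apply \lemref{lemC3.15}, use the hypothesis $W_k\neq V_k$ to force $m\ge k+1$, extract the factor $2\delta$ from $|\xi_k|\le 2\delta\,|\xi_m|$ via \eqref{eq:C4.3}, and bound the remaining $(k-1)$-flag terms either trivially (when $k=r+1$) or by \lemref{lemC3.7} (when $k\ge r+2$). The paper presents the two cases separately while you treat them uniformly, but the argument is the same.
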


\begin{proof}
Once more we wish to apply \lemref{lemC3.15}.
The assumption that $W_k$ does not coincide with the subspace
\eqref{eq:C3.8.1} implies that the number $m$ from the lemma
satisfies the condition $m \geq k+1$.
In particular, \eqref{eq:C3.10.2.1} holds and we may apply
the lemma, which yields that
the estimate \eqref{eq:C3.15.1} is true, provided that $\alpha > 0$
is sufficiently small and  the constant $C$ is sufficiently large.

Let $\xi \in K(r,\alpha,L,\delta)$.
Then the conditions $k \geq r+1$ and  $m \geq k+1$ imply,
using \eqref{eq:C4.3}, that $|\xi_m| \geq  (2\delta)^{-1} |\xi_k|$. So it 
follows from \eqref{eq:C3.15.1} that
\begin{equation}
\label{eq:C3.8.6.3}
\big| (-2 \pi i \xi_k) \ft{\mu}_{A,\Psi_k} (\xi)  \big|
\leq 2C \delta \sum_{l=1}^{N} |\ft{\mu}_{A,\Psi_{k-1}^l} (\xi)|.
\end{equation}
The sum on the right hand side is bounded as a function of $\xi$.
Hence given $\eta>0$, if we choose  $\delta>0$ small enough
then we can make the right hand side of \eqref{eq:C3.8.6.3}
smaller than $\eta$ in the domain $K(r,\alpha,L,\delta)$. This yields
\eqref{eq:C3.8.2} in the case when $k=r+1$.

In the case when $r+2 \leq k \leq d$, we multiply each side
of  \eqref{eq:C3.8.6.3} by the absolute values of the
terms $-2 \pi i \xi_j$ $(r+1 \leq j \leq k-1)$, and obtain
\begin{equation}
\label{eq:C3.8.6.2}
\Big|
\ft{\mu}_{A,\Psi_k} (\xi) \prod_{j=r+1}^{k} (-2 \pi i \xi_j)  \Big|
\leq 2 C \delta  \sum_{l=1}^{N} 
\Big| \ft{\mu}_{A,\Psi_{k-1}^l} (\xi)
\prod_{j=r+1}^{k-1} (-2 \pi i \xi_j)  \Big|.
\end{equation}
The sum on the right hand side of \eqref{eq:C3.8.6.2}
is bounded as a function of $\xi$, according to 
\lemref{lemC3.7}. Hence again, given $\eta>0$ we can
choose $\delta>0$  such that \eqref{eq:C3.8.2} holds.
\end{proof}

\subsection{}

\begin{lem}
\label{lemC3.6}
Let  $A$ be a polytope in $\R^d$, and let
$\Phi_r$ be an $r$-flag, and $\Phi_k$ be a $k$-flag $(0 \leq r < k \leq d)$,
both in standard position.
Then there exists $\alpha > 0$, such that for any $\eta > 0$ 
one can find $\delta$ and $L$ such that
\begin{equation}
\label{eq:C3.6.2}
\Big| 
\Big( \ft{\mu}_{A,\Phi_k} (\xi) \prod_{j=r+1}^{k} (-2 \pi i \xi_j)  \Big) -
\ft{\mu}_{A,\Phi_r} (\xi) \Big|  < \eta, \quad \xi \in K(r,\alpha,L,\delta).
\end{equation}
\end{lem}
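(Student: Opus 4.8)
The plan is to prove this by induction on the gap $k-r$, using \thmref{thmC7.1} to descend one flag-level at a time, and splitting the resulting flags into a ``main'' flag (the one still in standard position) and ``error'' flags (those that have deviated). First I would handle the base case $k = r+1$. Applying \thmref{thmC7.1} to the $k$-flag $\Phi_k = \Phi_{r+1}$, which is in standard position, and to the vector $v = \vec e_{r+1} \in V_{r+1}$, I obtain the identity $-2\pi i \xi_{r+1} \, \ft{\mu}_{A,\Phi_{r+1}}(\xi) = \sum_{l=1}^N \dotprod{\sigma^l}{\vec e_{r+1}} \, \ft{\mu}_{A,\Phi_r^l}(\xi)$, where each $\Phi_r^l$ is an $r$-flag whose top part $V_{r+1},\dots,V_d$ agrees with $\Phi_{r+1}$ and whose new $r$-dimensional subspace $V_r^l$ is one of the finitely many directions of $r$-faces appearing. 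Exactly one of these, say $\Phi_r^1$, has $V_r^1 = V_r = \{x_{r+1} = \cdots = x_d = 0\}$ — the standard-position subspace — and for that one $\sigma^1 = \vec e_{r+1}$ (up to the sign convention, which one fixes so the positive/negative half-spaces match), so $\dotprod{\sigma^1}{\vec e_{r+1}} = \pm 1$ and after matching sign conventions this term contributes exactly $\ft{\mu}_{A,\Phi_r}(\xi)$. For every other $l$, the subspace $V_r^l \neq V_r$, so \lemref{lemC3.14} (applied with the $r$-flag $\Psi_r = \Phi_r^l$) shows $|\ft{\mu}_{A,\Phi_r^l}(\xi)| < \eta'$ on $K(r,\alpha,L,\delta)$ once $L$ is large enough, and since $|\dotprod{\sigma^l}{\vec e_{r+1}}| \leq 1$, the sum of these error terms is $< N\eta'$. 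This gives $|(-2\pi i \xi_{r+1})\ft{\mu}_{A,\Phi_{r+1}}(\xi) - \ft{\mu}_{A,\Phi_r}(\xi)| < N\eta'$, which is \eqref{eq:C3.6.2} for $k = r+1$ after choosing $\eta' = \eta/N$.

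For the inductive step, suppose the statement holds whenever the gap is smaller, and let $\Phi_k$ be a $k$-flag in standard position with $k \geq r+2$. I apply \thmref{thmC7.1} to $\Phi_k$ with $v = \vec e_k \in V_k$ (note $V_k = \{x_{k+1} = \cdots = x_d = 0\}$ contains $\vec e_k$), yielding $-2\pi i \xi_k \, \ft{\mu}_{A,\Phi_k}(\xi) = \sum_{l=1}^N \dotprod{\sigma^l}{\vec e_k}\,\ft{\mu}_{A,\Phi_{k-1}^l}(\xi)$. Again exactly one term, say $l=1$, has $V_{k-1}^1 = \{x_k = \cdots = x_d = 0\}$ equal to the standard-position $(k-1)$-dimensional subspace, and the flag $\Phi_{k-1}^1$ is (after fixing half-space conventions) the $(k-1)$-flag in standard position, with $\dotprod{\sigma^1}{\vec e_k} = \pm 1$ contributing $\ft{\mu}_{A,\Phi_{k-1}}(\xi)$ up to sign. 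Multiplying the whole identity by $\prod_{j=r+1}^{k-1}(-2\pi i\xi_j)$, I get
\[
\ft{\mu}_{A,\Phi_k}(\xi)\prod_{j=r+1}^{k}(-2\pi i\xi_j) = \pm\, \ft{\mu}_{A,\Phi_{k-1}}(\xi)\prod_{j=r+1}^{k-1}(-2\pi i\xi_j) + \sum_{l \geq 2}(\pm)\,\ft{\mu}_{A,\Phi_{k-1}^l}(\xi)\prod_{j=r+1}^{k-1}(-2\pi i\xi_j).
\]
For $l \geq 2$, the subspace $W_{k-1} = V_{k-1}^l$ of the flag $\Phi_{k-1}^l$ does not coincide with $\{x_k = \cdots = x_d = 0\}$, so \lemref{lemC3.8} (applied with this $(k-1)$-flag) makes each error term $< \eta'$ on $K(r,\alpha,L,\delta)$ once $\delta$ is small enough; the coefficients $|\dotprod{\sigma^l}{\vec e_k}|$ are at most $1$. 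Meanwhile the main term $\pm\ft{\mu}_{A,\Phi_{k-1}}(\xi)\prod_{j=r+1}^{k-1}(-2\pi i\xi_j)$ is, by the inductive hypothesis, within $\eta'$ of $\pm\ft{\mu}_{A,\Phi_r}(\xi)$ on a suitable domain. Combining and choosing $\eta'$ small enough relative to $\eta$ and $N$ (and shrinking $\alpha$, then $\delta$, then enlarging $L$, in that order, so that all the invoked lemmas apply simultaneously on a common domain $K(r,\alpha,L,\delta)$), I obtain \eqref{eq:C3.6.2}.

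The main obstacle is bookkeeping of the parameters: each lemma we invoke produces its own threshold ($\alpha$ small, then $\delta$ small, then $L$ large), and we must check that these can be chosen in a consistent order so that a single domain $K(r,\alpha,L,\delta)$ works for every error term and for the inductive hypothesis at once. The key point making this work is the monotone dependence structure already built into the lemmas: \lemref{lemC3.15} and its corollaries \lemref{lemC3.7}, \lemref{lemC3.14}, \lemref{lemC3.8} all fix $\alpha$ first (depending only on the polytope and the flags, not on $\delta$, $L$), so one picks a common $\alpha$ at the outset; then \lemref{lemC3.8}-type errors are killed by choosing $\delta$ small; then \lemref{lemC3.14}-type errors are killed by choosing $L$ large. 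A secondary point requiring care is verifying that the one ``main'' child flag really is in standard position with the correct half-space orientation — this is a direct unwinding of the definition of $\Phi_{k-1}^l$ and $\sigma^l$ in \thmref{thmC7.1} together with the standard-position convention in \eqref{eq:C3.9.1}, and the sign ambiguity $H_\Phi = \pm H_\Psi$ noted in Section \ref{sec:prelim} is harmless once the half-spaces are fixed to match.
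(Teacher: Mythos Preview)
Your proposal is correct and follows essentially the same route as the paper: apply \thmref{thmC7.1} with $v=\vec e_k$, separate the standard-position child flag from the deviated ones, control the latter by \lemref{lemC3.14} (when $k=r+1$) or \lemref{lemC3.8} (when $k\ge r+2$), and use induction for the main term. The paper organizes the argument as three explicit cases (the special case $r=0,\,k=1$ where the error sum is empty, the base case $k=r+1$ with $r\ge1$, and the inductive step) rather than a single induction on $k-r$, and it also notes explicitly that if no child flag is in standard position then $\mu_{A,\Phi_{k-1}}$ is the zero measure---a point you allude to but do not spell out---but these are cosmetic differences, not substantive ones.
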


\begin{proof}
Let $V_r, V_{r+1}, \dots, V_d$ be the linear subspaces   given by
\eqref{eq:C3.9.1}. We apply \thmref{thmC7.1} to the $k$-flag $\Phi_k$ 
and to the vector $v = \vec{e}_k$ which belongs to $V_k$.
Then from \eqref{eq:C7.1.1} we get
\begin{equation}
\label{eq:C3.6.3}
-2 \pi i \xi_k \, \ft{\mu}_{A,\Phi_k} (\xi) = 
\ft{\mu}_{A,\Phi_{k-1}} (\xi) +
\sum_{l=1}^{N} \dotprod{\sigma^l}{\vec{e}_k} \,
\ft{\mu}_{A,\Psi_{k-1}^l} (\xi),
\end{equation}
where $\Phi_{k-1}$ is  a $(k-1)$-flag in standard position,
and each $\Psi_{k-1}^l$ is a $(k-1)$-flag 
determined by a sequence  $W_{k-1}^l, V_k, \dots, V_d$,
such that  $W_{k-1}^l$ is a $(k-1)$-dimensional linear subspace
of $V_k$ which is different from $V_{k-1}$. 
Notice that the  first term on the right hand side of \eqref{eq:C3.6.3}
corresponds to one of the $(k-1)$-flags in \eqref{eq:C7.1.1}
being in standard position, possibly after re-choosing the positive
and negative half-spaces of $V_k$. We can assume that
this is the case, since if neither of the $(k-1)$-flags corresponds 
to this term, then $\mu_{A,\Phi_{k-1}}$ must be the zero measure and 
again \eqref{eq:C3.6.3} is true.

If $r=0$ and $k=1$, then there is a
unique $(k-1)$-dimensional linear subspace of $V_k$, 
namely, the subspace $V_{k-1} = \{0\}$. Hence
in this case there are no $(k-1)$-dimensional linear subspaces
which are different from $V_{k-1}$, so the sum on the
right hand side of \eqref{eq:C3.6.3} is empty. Thus
we obtain that $ -2 \pi i \xi_k \, \ft{\mu}_{A,\Phi_k} (\xi) =
\ft{\mu}_{A,\Phi_r} (\xi) $ for every $\xi \in \R^d$,
which in particular implies \eqref{eq:C3.6.2}.

If $k = r+1$ and $r \geq 1$, then we apply \lemref{lemC3.14} to each one
of the $(k-1)$-flags $\Psi_{k-1}^l$. We may apply the
lemma since the subspace $W_{k-1}^l$ does not coincide
with $V_{k-1}$. We obtain from the lemma that if
$\alpha >0$ is small enough (not depending on $\eta$)
and if $L$ is large enough, then
\begin{equation}
\label{eq:C3.14.5}
| \ft{\mu}_{A,\Psi_{k-1}^l} (\xi)  | < N^{-1} \, \eta, \quad \xi \in K(r,\alpha,L,\delta),
\end{equation}
for all $1 \leq l \leq N$. Then
\eqref{eq:C3.6.3}, \eqref{eq:C3.14.5} and the estimate
$|\dotprod{\sigma^l}{\vec{e}_k}| \leq 1$ imply \eqref{eq:C3.6.2}.

Finally, it remains to prove the lemma  in the case
when $r+2 \leq k \leq d$. We do this by induction on $k$.
We  multiply both sides of \eqref{eq:C3.6.3}
by the terms $-2 \pi i \xi_j$ $(r+1 \leq j \leq k-1)$,
and obtain
\begin{align}
\label{eq:C3.6.4}
\ft{\mu}_{A,\Phi_k} (\xi) \prod_{j=r+1}^{k} (-2 \pi i \xi_j)  
&= \ft{\mu}_{A,\Phi_{k-1}} (\xi) \prod_{j=r+1}^{k-1} (-2 \pi i \xi_j)  \\
\label{eq:C3.6.5}
&+ \sum_{l=1}^{N} \dotprod{\sigma^l}{\vec{e}_k} \,
\ft{\mu}_{A,\Psi_{k-1}^l} (\xi)  \prod_{j=r+1}^{k-1} (-2 \pi i \xi_j),
\end{align}
By the inductive hypothesis, the right hand side
of \eqref{eq:C3.6.4} satisfies
\begin{equation}
\label{eq:C3.14.4}
\Big| 
\Big( \ft{\mu}_{A,\Phi_{k-1}} (\xi) \prod_{j=r+1}^{k-1} (-2 \pi i \xi_j)  \Big) -
\ft{\mu}_{A,\Phi_r} (\xi) \Big|  < \eta/2, \quad \xi \in K(r,\alpha,L,\delta),
\end{equation}
provided that $\alpha >0$ is small enough (not depending on $\eta$),
$\delta$ is small enough and $L$ is large enough.
Next, we estimate the sum in  \eqref{eq:C3.6.5} by
applying \lemref{lemC3.8} to each one
of the $(k-1)$-flags $\Psi_{k-1}^l$. We may apply the
lemma  since $W_{k-1}^l$ does not coincide
with $V_{k-1}$. We obtain from the lemma that if
$\delta>0$ is small enough,  then
\begin{equation}
\label{eq:C3.14.9}
\Big| \ft{\mu}_{A,\Psi_{k-1}^l} (\xi)  \prod_{j=r+1}^{k-1} (-2 \pi i \xi_j)
\Big|  <   (2N)^{-1} \, \eta, \quad \xi \in K(r,\alpha,L,\delta),
\end{equation}
for all $1 \leq l \leq N$. Then using
\eqref{eq:C3.6.4}, \eqref{eq:C3.6.5},
\eqref{eq:C3.14.4}, \eqref{eq:C3.14.9}  and
the estimate $|\dotprod{\sigma^l}{\vec{e}_k}| \leq 1$,
we obtain that \eqref{eq:C3.6.2} holds.
\end{proof}

\subsection{}
\begin{proof}[Proof of \thmref{thmC3.5}]
We apply \lemref{lemC3.6} with $k=d$. If $\Phi_d$
is a $d$-flag, then the measure
$\mu_{A,\Phi_d}$ is equal to ${\Vol_d}|_A$
(that is, the Lebesgue measure restricted to $A$).
In particular we have
$\ft{\mu}_{A,\Phi_d} = \ft{\1}_A$,
so the condition \eqref{eq:C3.5.2} is a special case
of \eqref{eq:C3.6.2} obtained when $k=d$.
Hence \thmref{thmC3.5} is just a special case of
\lemref{lemC3.6}.
\end{proof}

\begin{remark}
The above proof of \thmref{thmC3.5}  yields a quantitative
estimate on how small should $\delta$ be, and how large should
$L$ be, in order that \eqref{eq:C3.5.2} becomes valid. Indeed,
it can be inferred from the proof that 
there is a constant $c = c(A, \Phi_r) >0$ 
such that \eqref{eq:C3.5.2} is true if
$\delta = c \eta$ and $L = (c \eta)^{-1}$.
\end{remark}

% =========================================================

\section{Auxiliary lemmas}

In this section we prove two auxiliary lemmas 
needed for the proof of  \thmref{thmA3.1}.

\subsection{}

\begin{lem}
\label{lemC4.1}
Let $A$ be a polytope in $\R^d$, and let $\Phi_r$ be an $r$-flag 
in standard position $(1 \leq r \leq d-1)$.
Then the function $\ft{\mu}_{A,\Phi_r}$ has the form
\begin{equation}
\label{eq:C4.1.2}
\ft{\mu}_{A,\Phi_r} (\xi) = \sum_{k=1}^{N}
\varphi_k(\xi_1, \xi_2, \dots, \xi_r)
\exp\Big( - 2 \pi i \sum_{j=r+1}^{d} \tau_{k,j} \, \xi_j \Big), \quad \xi \in \R^d,
\end{equation}
where $\tau_{k,j}$ are real numbers, and
$\varphi_k$ are continuous functions on $\R^r$ vanishing at infinity.
\end{lem}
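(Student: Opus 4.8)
The plan is to unwind the definition of the flag measure $\mu_{A,\Phi_r}$ and compute its Fourier transform directly as a finite sum indexed by the $r$-sequences of faces of $A$ that are parallel to $\Phi_r$. Recall from \eqref{eq:C7-ft-muj} that
\[
\ft{\mu}_{A,\Phi_r}(\xi) = \sum_{\F_r} \eps_r \eps_{r+1} \cdots \eps_{d-1} \int_{F_r} \overline{e_\xi},
\]
where $\F_r$ ranges over the $r$-sequences $F_r \subset F_{r+1} \subset \cdots \subset F_d = A$ parallel to $\Phi_r$, and the integral is with respect to $r$-dimensional volume measure on the face $F_r$. Since there are only finitely many such sequences, it suffices to analyze a single term $\int_{F_r} \overline{e_\xi}$ and show it has the required product structure; the coefficient $\eps_r \cdots \eps_{d-1} \in \{\pm 1\}$ is a harmless constant which can be absorbed into $\varphi_k$.

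The key geometric observation is that because $\Phi_r$ is in standard position, each face $F_r$ appearing in one of these sequences is parallel to the coordinate subspace $V_r = \{x : x_{r+1} = \cdots = x_d = 0\}$; that is, the affine hull of $F_r$ is a translate $V_r + \tau_k$ of $V_r$ by some vector $\tau_k \in \R^d$. Consequently $F_r$ is contained in the hyperplane-type slice where the last $d-r$ coordinates are fixed at the constants $\tau_{k,r+1}, \dots, \tau_{k,d}$, while the first $r$ coordinates range over a bounded $r$-dimensional polytope $P_k$ (a copy of $F_r - \tau_k$ sitting inside $V_r \cong \R^r$). Writing $\overline{e_\xi(x)} = \exp(-2\pi i \sum_{j=1}^{d} x_j \xi_j)$ and splitting the sum into $j \le r$ and $j > r$, the factor from the last $d-r$ coordinates comes out of the integral as the constant $\exp\big(-2\pi i \sum_{j=r+1}^d \tau_{k,j}\xi_j\big)$, and what remains is
\[
\int_{P_k} \exp\Big(-2\pi i \sum_{j=1}^r y_j \xi_j\Big)\, dy =: \psi_k(\xi_1, \dots, \xi_r),
\]
the Fourier transform of the indicator of the bounded $r$-dimensional polytope $P_k$. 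Setting $\varphi_k := \eps_r \cdots \eps_{d-1}\, \psi_k$ then gives the displayed form \eqref{eq:C4.1.2}, after re-indexing by $k$ over the finite list of $r$-sequences (here $N$ is simply the number of such sequences, which we are free to rename).

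It remains to verify that each $\varphi_k$ is a continuous function on $\R^r$ vanishing at infinity. Continuity is immediate: $\psi_k$ is the Fourier transform of the compactly supported $L^1$ function $\1_{P_k}$ on $\R^r$, so it is (uniformly) continuous. Vanishing at infinity is the content of the Riemann--Lebesgue lemma applied to $\1_{P_k} \in L^1(\R^r)$. I do not anticipate a serious obstacle here; the only point requiring a little care is the bookkeeping of ``standard position'' to guarantee that the last $d-r$ coordinates really are constant on each $F_r$ — this is exactly where the hypothesis that $\Phi_r$ is in standard position (so that $V_r$ is the coordinate subspace in \eqref{eq:C3.14.1}) is used, and it is what makes the separation of variables in the exponential clean. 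If one instead allowed an arbitrary $r$-flag, the same argument would produce a function of an appropriate $r$-dimensional linear combination of the coordinates rather than of $(\xi_1, \dots, \xi_r)$ literally, which is why the lemma is phrased for flags in standard position.
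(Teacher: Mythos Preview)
Your proof is correct and follows essentially the same approach as the paper's own proof: both observe that a face $F_r$ parallel to the coordinate subspace $V_r$ has constant last $d-r$ coordinates, separate the exponential accordingly, and identify the remaining factor as the Fourier transform of an $r$-dimensional indicator function, which is continuous and vanishes at infinity by Riemann--Lebesgue.
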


\begin{proof}
Let $V_r, V_{r+1}, \dots, V_d$ be the linear subspaces   given by
\eqref{eq:C3.9.1}, and suppose that
$F_r$ is an $r$-dimensional face of $A$ that is parallel 
to the subspace $V_r$. Then there are real numbers 
$\tau_{r+1}, \tau_{r+2}, \dots, \tau_{d}$ such that 
\[
F_r \subset \{x \in \R^d: x_{r+1} = \tau_{r+1}, \; 
x_{r+2} = \tau_{r+2}, \; \dots, \; x_{d} = \tau_{d} \}.
\]
The Fourier transform of the measure $\sigma := {\Vol_r}|_{F_r}$
(the $r$-dimensional volume measure restricted to $F_r$)
is therefore given by
\begin{equation}
\label{eq:C4.1.3}
\ft{\sigma}(\xi) = \varphi(\xi_1, \xi_2, \dots, \xi_r)
\exp\Big( - 2 \pi i \sum_{j=r+1}^{d} \tau_{j} \, \xi_j \Big),
 \quad \xi \in \R^d,
\end{equation}
where the function $\varphi$ is the Fourier transform of the indicator
function of the polytope in $\mathbb R^{r}$ obtained by 
projecting the face $F_r$ on the $(x_1,x_2,\dots,x_r)$ coordinates.
In particular, $\varphi$ is a continuous function on $\R^r$ vanishing at infinity.

Now the measure $\mu_{A,\Phi_r}$ is a linear combination
(with $\pm 1$ coefficients)
of measures of the form ${\Vol_r}|_{F_r}$, where
$F_r$ belongs to a sequence of faces
$F_r \subset F_{r+1} \subset \cdots \subset F_d$
such that $F_j$ is a $j$-dimensional face of $A$
which is parallel  to $V_j$ $(r \leq j \leq d-1)$. Hence the 
 Fourier transform $\ft{\mu}_{A,\Phi_r}$
of the measure $\mu_{A,\Phi_r}$ is a linear combination
of functions of the form \eqref{eq:C4.1.3}.
This implies that $\ft{\mu}_{A,\Phi_r}$ has the
form \eqref{eq:C4.1.2} as claimed.
\end{proof}

\subsection{}

\begin{lem}
\label{lemC4.7}
Let $p(t)$ be a trigonometric polynomial given by
\begin{equation}
\label{eq:C4.7}
p(t) = \sum_{k=1}^{N} c_k e^{2 \pi i \tau_k t}  \quad (t \in \R)
\end{equation}
where $\tau_k$ are real numbers, and $c_k$ are complex numbers. 
For any $\eta > 0$ there exists a relatively dense set $T \subset \R$,  such that
$|p(t' - t) - p(0)| < \eta$ for any two elements $t, t' \in T$.
\end{lem}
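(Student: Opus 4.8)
The statement is a quantitative almost-periodicity result for trigonometric polynomials: the translates of $p$ should be uniformly close to $p(0)$ along a relatively dense set. The plan is to reduce the problem to the classical fact that every trigonometric polynomial is a Bohr almost-periodic function, and then to extract from the almost-periodicity the relatively dense set $T$ on which the translates behave as required. Concretely, I would argue as follows.

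\textbf{Step 1: Almost-periodicity.} First I would recall that $p$, being a finite linear combination of the characters $t \mapsto e^{2\pi i \tau_k t}$, is a uniformly almost-periodic function on $\R$ in the sense of Bohr. Hence for any $\eps>0$ the set of $\eps$-almost-periods
\[
E(\eps) = \{ s \in \R : |p(t+s) - p(t)| < \eps \text{ for all } t \in \R \}
\]
is relatively dense in $\R$. This is Bohr's theorem; alternatively, since all the frequencies $\tau_k$ lie in the finitely generated group they span, one can invoke the Kronecker/Weyl simultaneous approximation theorem to produce, for each $\eps$, a relatively dense set of $s$ with $|e^{2\pi i \tau_k s}-1|$ small for every $k$ simultaneously, which directly gives $s \in E(\eps)$ after absorbing the constants $|c_k|$.

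\textbf{Step 2: Choice of $T$ and the triangle inequality.} Given $\eta>0$, I would set $T = E(\eta/2)$, which is relatively dense by Step 1. For any two elements $t, t' \in T$, write $s := t' - t$. Since $t' \in T = E(\eta/2)$, applying the defining property of $E(\eta/2)$ with the argument shifted by $-t$ gives, in particular, $|p(t'-t) - p(-t+\,\text{(something)})|$... more cleanly: I would instead take $T = E(\eta)$ and note that for $t \in T$ we have, choosing the base point $0$ in the definition of almost-period, $|p(0 - t + t) - p(0-t)| = |p(t) - p(-t)|$, which is not quite what is wanted. The clean route is: for $t\in E(\eta)$ we get $|p(x+t)-p(x)|<\eta$ for all $x$; take $x = -t$ to obtain $|p(0)-p(-t)|<\eta$. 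So with $T := E(\eta/2) \cap (-E(\eta/2))$ — still relatively dense, being the intersection of two relatively dense sets each of which is symmetric up to the fact that $E(\eps)=-E(\eps)$ — we get $|p(t)-p(0)|<\eta/2$ for every $t\in T$ (using $|p(0)-p(-t)|<\eta/2$ and $-t\in E(\eta/2)$). Then for $t, t' \in T$,
\[
|p(t'-t) - p(0)| \le |p(t'-t) - p(t')| + |p(t') - p(0)| < \tfrac{\eta}{2} + \tfrac{\eta}{2} = \eta,
\]
where the first term is estimated by applying the almost-period property of $-t \in E(\eta/2)$ at the point $t'$, namely $|p(t' - t) - p(t')| < \eta/2$. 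This is exactly the desired conclusion.

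\textbf{Main obstacle.} The only real content is Step 1 — producing a relatively dense set of common $\eps$-almost-periods of the finitely many exponentials $e^{2\pi i\tau_k t}$. If one does not wish to quote Bohr's theory, the honest work is a Dirichlet-box/Kronecker argument: the map $t \mapsto (e^{2\pi i\tau_1 t}, \dots, e^{2\pi i\tau_N t})$ has image contained in a closed subgroup (a subtorus) of $\mathbb{T}^N$, and the preimage of a small neighborhood of the identity is relatively dense by the pigeonhole principle applied to a long interval. Everything after that is the elementary triangle-inequality bookkeeping in Step 2, with only minor care needed about symmetry of the almost-period sets (which holds because $|p(x-s)-p(x)| = |p(y)-p(y+s)|$ after the substitution $y=x-s$, so $E(\eps)=-E(\eps)$) and about the fact that a finite intersection of relatively dense sets that are all invariant under the same subtorus translations remains relatively dense.
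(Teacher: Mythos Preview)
Your proposal is correct and follows essentially the same route as the paper's own first proof: take $T$ to be the relatively dense set $E(\eta/2)$ of $\eta/2$-almost-periods of the Bohr almost-periodic function $p$, and conclude by a triangle inequality (the paper writes $|p(t'-t)-p(0)|\le \sup_x|p(x+t')-p(x)|+\sup_x|p(x+t)-p(x)|$, which is the same estimate as yours after the substitution $x\mapsto x-t$). Your aside about Kronecker/Dirichlet simultaneous approximation is exactly the paper's second proof. The only cosmetic point is that since $E(\eps)=-E(\eps)$, your set $E(\eta/2)\cap(-E(\eta/2))$ is just $E(\eta/2)$, so the digression about intersections of relatively dense sets is unnecessary.
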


We give two proofs, one relies on the theory of almost periodic
functions (in the same spirit as in \cite{KP02}), while the other 
on a result from dynamical systems.

\begin{proof}[First proof of \lemref{lemC4.7}]
The trigonometric polynomial  $p$ is  a linear combination of periodic functions, 
and so it is an almost periodic function, see for instance \cite[Section VI.5]{Kat04}.
According to the definition of an almost periodic function, this implies
that given $\eta>0$ there exists
a relatively dense set $T \subset \R$ such that
\[
\sup_{x \in \R} |p(x+t) - p(x)| < \eta/2, \quad t \in T.
\]
Then for any two elements $t, t' \in T$ we have
\begin{align}
& \nonumber  |p(t'-t) - p(0)| \leq 
\sup_{x \in \R} |p(x+t') - p(x+t)| \\
& \qquad  \leq
\sup_{x \in \R} |p(x+t') - p(x)|  +\sup_{x \in \R} |p(x+t) - p(x)|
< \eta. \qedhere
\end{align}
\end{proof}

\begin{proof}[Second proof of \lemref{lemC4.7}]
For $\delta>0$, let $T(\delta) = T(\delta; \tau_1, \dots, \tau_N)$ denote the set of integers $t$ for which the condition $\dist( \tau_k t, \Z) < \delta$ holds for all $1 \leq k \leq N$. Then $T(\delta)$ is a relatively dense set, see for instance \cite[Theorem 1.21]{Fur81}.
For any two elements $t, t' \in T(\delta)$ we have
\[
|e^{2 \pi i \tau_k (t' - t)} - 1| \leq 2 \pi \dist( \tau_k (t' - t), \Z) <  4 \pi \delta
\quad  (1 \leq k \leq N),
\]
and therefore
\[
|p(t'-t) - p(0)| \leq \sum_{k=1}^{N} |c_k| \cdot 
|e^{2 \pi i \tau_k (t' - t)} - 1|
\leq 4 \pi \delta \sum_{k=1}^{N} |c_k| .
\]
Hence if $\delta = \delta(p, \eta)$ is chosen  sufficiently small, this implies that
$|p(t' - t) - p(0)| < \eta$.
\end{proof}

\section{Proof of \thmref{thmA3.1}}

We now give the proof of \thmref{thmA3.1} using the results obtained above.
The proof strategy extends the one that was introduced in \cite{KP02}
and further developed in \cite[Section 4]{GL17}.

\subsection{}
Let $A$ be a spectral polytope in $\R^d$, and let $\Phi_r$
 be an $r$-flag $(1 \leq r \leq d-1)$. We must show that $H_{\Phi_r}(A) = 0$.
By applying an invertible linear transformation, we may assume that 
$\Phi_r$ is in standard position.

Suppose to the contrary that $H_{\Phi_r}(A) \neq 0$. Choose a number $\eta$ such that
\begin{equation}
\label{eq:C5.2.1}
0 < 3 \eta < |H_{\Phi_r}(A)|.
\end{equation}
According to \thmref{thmC3.5} we can find $\alpha$,  $\delta$ and $L$ such that
\eqref{eq:C3.5.2} holds. Let $v = v(r,\delta)$ be the vector in $\R^d$ given by
\begin{equation}
\label{eq:C5.5}
v := \sum_{j=r+1}^{d} \delta^{d-j} \, \vec{e}_j ,
\end{equation}
and define 
\begin{equation}
\label{eq:C5.6}
p(t) := \ft{\mu}_{A,\Phi_r} (t v), \quad t \in \R.
\end{equation}
By \lemref{lemC4.1}, the function $\ft{\mu}_{A,\Phi_r}$ is of the form
\eqref{eq:C4.1.2}, and so we have
\begin{equation}
\label{eq:C5.7}
p(t) =  \sum_{k=1}^{N}
\varphi_k(0, 0, \dots, 0)
\exp\Big(- 2 \pi i t \sum_{j=r+1}^{d} \tau_{k,j} \, \delta^{d-j}  \Big).
\end{equation}
Hence $p(t)$ is a trigonometric polynomial of the form
\eqref{eq:C4.7}.
By \lemref{lemC4.7} there is a relatively dense set $T \subset \R$  such that
\begin{equation}
\label{eq:C5.7.2}
|p(t' - t) - p(0)| < \eta, \quad t, t' \in T. 
\end{equation}

Since the function
$\ft{\mu}_{A,\Phi_r}$ is uniformly continuous on $\R^d$ (being the Fourier
transform of a finite measure), there is $\eps>0$
such that 
\begin{equation}
\label{eq:C5.7.1}
|\ft{\mu}_{A,\Phi_r} (\xi') - \ft{\mu}_{A,\Phi_r} (\xi)| < \eta
\quad
\text{whenever} \quad  \xi, \xi' \in \R^d, \; |\xi'-\xi| < 2\eps.
\end{equation}

Define
\begin{equation}
\label{eq:C5.8}
E := \{t v + w : t \in T, \, w\in \R^d, \, |w|<\eps\}.
\end{equation}
Then the set $E$ consists of the union of open balls of radius $\eps$ centered
at the points of the form $tv$ $(t \in T)$. These points
constitute a relatively dense subset of
the line spanned by the vector $v$. 

\subsection{}
We now claim that
\begin{equation}
\label{eq:C5.9}
|\ft{\mu}_{A,\Phi_r} (\xi)| > \eta, \quad \xi \in E-E.
\end{equation}
Indeed, let $\xi$ be a point in $E-E$. Then we  may write $\xi = (t'-t)v + w$,
where $t, t' \in T$ and $|w|<2\eps$. Hence using
\eqref{eq:C5.6}, \eqref{eq:C5.7.2}, \eqref{eq:C5.7.1}
it follows that
\begin{equation}
\label{eq:C5.10}
|\ft{\mu}_{A,\Phi_r} (\xi)| > |\ft{\mu}_{A,\Phi_r} ((t'-t)v)| - \eta
= |p(t'-t)| - \eta >  |p(0)| - 2\eta.
\end{equation}
Note that
\begin{equation}
\label{eq:C5.11}
p(0) = \ft{\mu}_{A,\Phi_r} (0) = \int d\mu_{A,\Phi_r} = 
H_{\Phi_r}(A).
\end{equation}
Hence \eqref{eq:C5.2.1}, \eqref{eq:C5.10} and \eqref{eq:C5.11} imply
that \eqref{eq:C5.9} holds as claimed.

\subsection{}
For each $h >0$, we let $S(h)$ denote the cylinder of radius $h$ along the line spanned
by the vector $v$, that is,
\[
S(h) := \{t v + w \,:\, t\in\mathbb R, \; w \in\mathbb R^d, \; |w|<h\}.
\]
Notice that
\begin{equation}
\label{eq:C5.12}
E - E \subset S(2\eps).
\end{equation}
It is straightforward to check, using \eqref{eq:C5.5}, that there is $R>0$ such that
\begin{equation}
\label{eq:C5.13}
S(2\eps) \setminus B_R \subset K(r,\alpha,L,\delta),
\end{equation}
where $B_R$ denotes the open ball of radius $R$ centered at the origin.

\subsection{}
Let $\Lambda$ be a spectrum for $A$. 
 We claim that for any $\tau\in\mathbb R^d,$ if $\lambda,\lambda'$ are two points in
 $\Lambda\cap(E+\tau)$, then $|\lambda'-\lambda| < R.$ Indeed, if not,
 then it follows from \eqref{eq:C5.12}, \eqref{eq:C5.13} that
\[
\lambda'-\lambda \in (E-E) \setminus B_R  \subset K(r,\alpha,L,\delta).
\]
On the other hand, by \eqref{eqP1.2}  we have $\ft{\1}_{A}(\lambda'-\lambda) = 0$,
 hence \eqref{eq:C3.5.2} implies that we must have
$| \ft{\mu}_{A,\Phi_r} (\lambda'-\lambda) | < \eta$.
However this is not possible, due to  \eqref{eq:C5.9}.

Since $\Lambda$ is a uniformly discrete set, it follows  that
 $\Lambda\cap(E+\tau)$ is a finite set, for every $\tau\in\mathbb R^d$.
Since $\Lambda$ is a relatively dense set, there is $M>0$ such that
 every ball of radius $M$ intersects $\Lambda$.
The cylinder $S(M)$ 
can be covered by a finite number of translates of $E$,
hence $\Lambda \cap S(M)$ is also a
finite set. It follows that $S(M)$ must contain a ball of radius $M$
free from points of $\Lambda$, a contradiction.
\thmref{thmA3.1} is thus proved.
\qed

% =========================================================

\section{Remark}

The assumption in \thmref{thmA3.1} (and in \thmref{thmA3.2})
that the polytope $A$ is spectral, was used only in order to know that
there is a relatively dense set of frequencies $\Lambda \subset \R^d$ such that
the exponential system  $E(\Lambda)$ is orthogonal
in the space $L^2(A)$. Hence the result remains valid
under this weaker assumption. In other words, we have actually proved
the following more general version of the result:

\begin{thm}
\label{thmA4.1}
Let $A$ be a polytope in $\R^d$ (not necessarily convex or connected).
Assume that there is a relatively dense set $\Lambda \subset \R^d$ such that
the exponential system  $E(\Lambda)$ is orthogonal
in the space $L^2(A)$. Then $H_{\Phi}(A) = 0$
for every $r$-flag $\Phi$ $(1 \leq r \leq d-1)$.
As a consequence, $A$ is equidecomposable by translations
to a cube of the same volume.
\end{thm}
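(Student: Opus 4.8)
The plan is to derive a contradiction from the assumption that $H_{\Phi_r}(A) \neq 0$ for some $r$-flag $\Phi_r$, following the strategy pioneered in \cite{KP02} and refined in \cite[Section 4]{GL17}. First I would reduce to the case where $\Phi_r$ is in standard position, using the linear-invariance of spectrality and of the flag functionals. Having done this reduction, the key idea is to exploit \thmref{thmC3.5}, which tells us that along suitable directions the Fourier transform $\ft{\1}_A$ is controlled, up to multiplication by the polynomial $\prod_{j=r+1}^{d}(-2\pi i\xi_j)$, by the flag-measure transform $\ft{\mu}_{A,\Phi_r}$, and the latter does \emph{not} vanish at the origin precisely because its value there equals $H_{\Phi_r}(A) \neq 0$ by \eqref{eq:C5.11}.

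The main steps, in order, would be: (1) choose $\eta>0$ with $3\eta < |H_{\Phi_r}(A)|$, and apply \thmref{thmC3.5} to obtain $\alpha, \delta, L$ for which the approximation \eqref{eq:C3.5.2} holds in the cone $K(r,\alpha,L,\delta)$; (2) pick a single direction $v$ inside this cone — the specific choice $v = \sum_{j=r+1}^{d}\delta^{d-j}\vec e_j$ ensures the whole cylinder $S(2\eps)\setminus B_R$ lands in $K(r,\alpha,L,\delta)$ — and observe via \lemref{lemC4.1} that the restriction $p(t) = \ft{\mu}_{A,\Phi_r}(tv)$ is a genuine trigonometric polynomial with $p(0) = H_{\Phi_r}(A)$; (3) invoke \lemref{lemC4.7} to extract a relatively dense set $T\subset\R$ on which $p$ is nearly constant, thicken the points $\{tv : t\in T\}$ into a set $E$ of small balls, and conclude using uniform continuity of $\ft{\mu}_{A,\Phi_r}$ that $|\ft{\mu}_{A,\Phi_r}|$ stays bounded away from zero on $E-E$; (4) bring in the relatively dense orthogonal exponential system $\Lambda$: the orthogonality relation \eqref{eqP1.2} forces $\ft{\1}_A$ to vanish on $(\Lambda-\Lambda)\setminus\{0\}$, which by \eqref{eq:C3.5.2} would force $\ft{\mu}_{A,\Phi_r}$ to be small at such points if they lay in the relevant cone — contradicting step (3) — so any two points of $\Lambda$ inside a translate $E+\tau$ must be within distance $R$; (5) finish by a packing/counting argument: since $\Lambda$ is uniformly discrete, $\Lambda\cap(E+\tau)$ is finite, so $\Lambda\cap S(M)$ is finite for every $M$, which contradicts relative density of $\Lambda$ (a large enough cylinder would have to contain an $M$-ball free of $\Lambda$). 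The last sentence of \thmref{thmA4.1} then follows immediately: as explained in the introduction, $H_\Phi(A) = 0$ for all flags together with the completeness of the Hadwiger invariants \cite{JT78, Sah79} gives equidecomposability by translations of $A$ to a cube of equal volume.

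The one point requiring care — and the place where the argument is genuinely more delicate than in \cite{KP02} — is step (1)–(2): one must verify that a single line direction $v$ can be chosen so that an entire \emph{cylinder} around it (minus a bounded ball) sits inside the cone $K(r,\alpha,L,\delta)$, which is where the graded choice of coefficients $\delta^{d-j}$ enters and where the nested inequalities \eqref{eq:C7.1}–\eqref{eq:C4.3} defining the cone must be checked. I expect the main conceptual obstacle, however, to be already encapsulated in \thmref{thmC3.5} itself (the Stokes-type expansion plus the careful separation of main and error terms for general $r$); given that theorem, the deduction of \thmref{thmA4.1} is a relatively clean contradiction argument of the type above, and no essential new difficulty arises from weakening ``spectral'' to ``admits a relatively dense orthogonal exponential system,'' since only relative density and uniform discreteness of $\Lambda$, together with the vanishing of $\ft{\1}_A$ on $(\Lambda-\Lambda)\setminus\{0\}$, are ever used.
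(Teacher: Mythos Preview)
Your proposal is correct and follows essentially the same argument as the paper: the paper's proof of \thmref{thmA3.1} in Section~6 proceeds through exactly the steps you outline (reduction to standard position, the choice $3\eta<|H_{\Phi_r}(A)|$, \thmref{thmC3.5}, the direction $v=\sum_{j=r+1}^{d}\delta^{d-j}\vec e_j$, the trigonometric-polynomial structure via \lemref{lemC4.1}, the relatively dense set $T$ from \lemref{lemC4.7}, the thickened set $E$, the lower bound on $|\ft{\mu}_{A,\Phi_r}|$ over $E-E$, and the finiteness/relative-density contradiction), and Section~7 then simply observes---as you do---that only orthogonality, uniform discreteness, and relative density of $\Lambda$ were used, yielding \thmref{thmA4.1}. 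Your identification of the cylinder-in-cone inclusion \eqref{eq:C5.13} as the one geometric point needing a careful check is also on the mark.
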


In the special case when the polytope $A$ is convex, the conclusion
implies that $A$ must be centrally symmetric and have centrally symmetric facets.
This recovers a result stated in \cite[Theorem 5.5]{GL18}.

% =========================================================


\begin{thebibliography}{99999999}

\bibitem[Bar02]{Bar02}
	A. Barvinok, 
	A course in convexity. 
	American Mathematical Society, Providence, 2002.

\bibitem[Bol78]{Bol78}
	V. Boltianski,
	Hilbert's third problem. Wiley, 1978.

\bibitem[Deh01]{Deh01}
	M. Dehn,
	Ueber den Rauminhalt. 
	Math. Ann. \textbf{55} (1901), no. 3, 465--478 (German).

\bibitem[Fug74]{Fug74}
	B. Fuglede, Commuting self-adjoint partial differential operators and a group
	theoretic problem. J. Funct. Anal. \textbf{16} (1974), 101--121.

\bibitem[Fur81]{Fur81}
	H. Furstenberg, 
	Recurrence in ergodic theory and combinatorial number theory. 
	Princeton University Press, 1981.

\bibitem[GL17]{GL17}
	R. Greenfeld, N. Lev,
	Fuglede's spectral set conjecture for convex polytopes.
	Anal. PDE \textbf{10} (2017), no. 6, 1497--1538.

\bibitem[GL18]{GL18}
	R. Greenfeld, N. Lev, 
	Spectrality of product domains and Fuglede's conjecture for convex polytopes. 
	J. Anal. Math., to appear. \texttt{arXiv:1801.02164}.

\bibitem[Had52]{Had52}
	H. Hadwiger, Translationsinvariante, additive und schwachstetige Polyederfunktionale.
	Arch. Math. \textbf{3} (1952), 387--394 (German).

\bibitem[Had54]{Had54}
	H. Hadwiger, 
	Zum Problem der Zerlegungsgleichheit k-dimensionaler Polyeder. 
	Math. Ann. \textbf{127} (1954), 170--174 (German).

\bibitem[Had57]{Had57}
	H. Hadwiger, 
	Vorlesungen \"uber Inhalt, Oberfl\"ache und Isoperimetrie.
	Springer-Verlag, 1957 (German).

\bibitem[Had68]{Had68}
	H. Hadwiger, Translative Zerlegungsgleichheit der Polyeder des gew\"ohnlichen Raumes. 
	J. Reine Angew. Math. \textbf{233} (1968), 200--212 (German).

\bibitem[HG51]{HG51}
	H. Hadwiger, P. Glur, 
	Zerlegungsgleichheit ebener Polygone. 
	Elem. Math. \textbf{6} (1951), 97--106 (German).

\bibitem[IKT01]{IKT01}
	A. Iosevich, N. Katz, T. Tao, Convex bodies with a point of
	curvature do not have Fourier bases. Amer. J. Math. \textbf{123} (2001), no. 1, 115--120.

\bibitem[IKT03]{IKT03}
	 A. Iosevich, N. Katz, T. Tao, The Fuglede spectral conjecture holds
	for convex planar domains. Math. Res. Lett. \textbf{10} (2003), no. 5--6, 559--569.

\bibitem[Jes72]{Jes72}
	B. Jessen,
	Zur Algebra der Polytope, 
	Nachr. Akad. Wiss. G\"ottingen Math.-Phys. Kl. II (1972), 47--53  (German).

\bibitem[JT78]{JT78}
	B. Jessen, A. Thorup, The algebra of polytopes in affine spaces.
	Math. Scand. \textbf{43} (1978), 211--240.

\bibitem[Kat04]{Kat04}
	Y. Katznelson, 
	An introduction to harmonic analysis, 3rd edition.
	Cambridge University Press, 2004.

\bibitem[Kol00]{Kol00}
	M. Kolountzakis, Non-symmetric convex domains have no basis of
	exponentials. Illinois J. Math. \textbf{44} (2000), no. 3, 542--550. 

\bibitem[KP02]{KP02}
	M. Kolountzakis, M. Papadimitrakis, A class of non-convex polytopes
	that admit no orthonormal basis of exponentials.
	Illinois J. Math. \textbf{46} (2002), no. 4, 1227--1232.

\bibitem[LM95a]{LM95a}
	J. C. Lagarias, D. Moews,
	Polytopes that fill ${\mathbb R}^n$ and scissors congruence.
	Discrete Comput. Geom. \textbf{13} (1995), no. 3--4, 573--583. 

\bibitem[LM95b]{LM95b}
	J. C. Lagarias, D. Moews,
	Acknowledgment of priority concerning:
	 ``Polytopes that fill ${\mathbb R}^n$ and scissors congruence''. 
	Discrete Comput. Geom. \textbf{14} (1995), no. 3, 359--360. 

\bibitem[LL18]{LL18}
	N. Lev, B. Liu,
	Multi-tiling and equidecomposability of polytopes by lattice translates.
	Bull. Lond. Math. Soc., to appear, \texttt{doi:10.1112/blms.12297}.

\bibitem[LM19]{LM19}
	N. Lev, M. Matolcsi,
	The Fuglede conjecture for convex domains is true in all dimensions.
	Preprint, \texttt{arXiv:1904.12262}.

\bibitem[M\"ur75]{Mur75}
	P. {M\"urner}, Translative Parkettierungspolyeder und Zerlegungsgleichheit.
	Elem. Math. \textbf{30} (1975), 25--27 (German).  

\bibitem[M\"ur77]{Mur77}
	P. {M\"urner}, {{Translative Zerlegungsgleichheit von Polytopen}}. 
	{Arch. Math.} \textbf{29} (1977), 218--224 (German).  

\bibitem[Sah79]{Sah79}
	C.-H. {Sah}, {{Hilbert's third problem: scissors congruence}}.
	Research Notes in Mathematics. 33. Pitman Advanced
	Publishing Program, San Francisco, 1979.

\bibitem[Syd65]{Syd65}
	J.-P. Sydler, {Conditions n\'ecessaires et suffisantes pour l'\'equivalence 
	des poly\`edres de l'espace euclidien \`a trois dimensions}.
	Comment. Math. Helv. \textbf{40} (1965), 43--80 (French).

\bibitem[Tao04]{Tao04}
	T. Tao,  Fuglede's conjecture is false in 5 and higher dimensions. Math. Res.
	Lett. \textbf{11} (2004), no. 2--3, 251--258.


\end{thebibliography}
\end{document}